\documentclass[10pt,a4paper]{amsart}
\usepackage{tikz}
\usepackage{tikz-cd}
\usetikzlibrary{decorations.markings,decorations.pathmorphing,cd}
\usetikzlibrary{arrows}
\usetikzlibrary{calc}
\usepackage{pgfplots}
\pgfplotsset{every axis/.append style={
                    axis x line=middle,    % put the x axis in the middle
                    axis y line=middle,    % put the y axis in the middle
                    axis line style={-,color=blue}, % arrows on the axis
                    xlabel={$x$},          % default put x on x-axis
                    ylabel={$y$},          % default put y on y-axis
            }}
\usepackage{amsmath}
\usepackage{amsthm}
\usepackage{amsfonts}
\usepackage{amssymb}
\usepackage{enumerate}
\usepackage{enumitem}
\usepackage{graphicx}
\usepackage{hyperref}
\usepackage{nicefrac}
\usepackage{cancel}

\newcommand\enet[1]{\renewcommand\theenumi{#1} 
\renewcommand\labelenumi{\theenumi}}

\DeclareMathOperator{\orb}{orb}

\DeclareMathOperator{\SU}{SU}

\def\cT{{\mathcal T}}

\def\ZZ{\mathbb{Z}}

\def\CC{\mathbb{C}}

\def\QQ{\mathbb{Q}}
\def\FF{\mathbb{F}}
\def\QQ{\mathbb{Q}}

\def\PP{\mathbb{P}}

\def\cI{i}

\def\congmap#1{\smash{\mathop{\cong}\limits^{#1}}}

\def\rightmap#1{\smash{\mathop{\rightarrow}\limits^{#1}}}

\newcommand{\Conv}{\mathop{\scalebox{1.5}{\raisebox{-0.2ex}{$\ast$}}}}%

\newtheorem{thm}{Theorem}[section]  %CHOOSE [chapter] or [section]
\newtheorem{main-thm}{Theorem}  %CHOOSE [chapter] or [section]
\newtheorem{prop}{Proposition}[section]%
\newtheorem{proper}{Condition}[section]%
\newtheorem{main-conj}{Conjecture}[section]%
\newtheorem{cor}{Corollary}[section]
\newtheorem{lemma}{Lemma}[section]

\theoremstyle{remark}
\newtheorem{rem}{Remark}[section]
\newtheorem{notation}{Notation}[section]
\theoremstyle{definition}
\newtheorem{dfn}{Definition}[section]
\newtheorem{exam}{Example}[section]

\makeatletter
\let\c@lemma\c@thm
\let\c@prop\c@thm
\let\c@propdef\c@thm
\let\c@proper\c@thm
\let\c@problem\c@thm
\let\c@conj\c@thm
\let\c@cor\c@thm
\let\c@rem\c@thm
\let\c@dfn\c@thm
\let\c@notation\c@thm
\let\c@exam\c@thm
\makeatother

\title[On the topology of fiber-type curves...]{On the topology of fiber-type curves: \\ an affine Zariski pair of nodal curves}

\author[Jos\'e I. Cogolludo-Agust{\'i}n and Eva Elduque]{J.I.~Cogolludo-Agust{\'i}n and Eva Elduque}
\address{Departamento de Matem\'aticas, IUMA\\
Universidad de Zaragoza \\
C.~Pedro Cerbuna 12 \\
50009 Zaragoza, Spain.} 
\email{jicogo@unizar.es} 

\address{Departamento de Matem\'aticas\\ 
Universidad Aut\'onoma de Madrid, ICMAT \\
28049 Madrid, Spain.}
\email{eva.elduque@uam.es}

\makeindex

\begin{document}

\thanks{The authors are partially supported by the Spanish Government PID2020-114750GB-C31. 
The first author is partially supported by the Departamento de Ciencia, Universidad y Sociedad del 
Conocimiento del Gobierno de Arag{\'o}n (Grupo de referencia E22\_20R ``{\'A}lgebra y Geometr{\'i}a''). The second author is partially supported by the Ram\'on y Cajal Grant RYC2021-031526-I funded by MCIN/AEI /10.13039/501100011033 and by the European Union NextGenerationEU/PRTR} 

\subjclass[2020]{32S25, 32S55, 32S05, 32S20, 57K31} 

\begin{abstract}
In this paper we explore conditions for a curve in a smooth projective surface to have a free product 
of cyclic groups as the fundamental group of its complement. It is known that if the surface is $\PP^2$,
then such curves must be of fiber type, i.e. a finite union of fibers of an admissible map onto a complex 
curve. In this setting, we exhibit an infinite family of Zariski pairs of fiber-type curves, that is, 
pairs of plane projective fiber-type curves whose tubular neighborhoods are homeomorphic, but whose 
embeddings in $\PP^2$ are not. This includes a Zariski pair of curves in $\CC^2$ with only nodes as 
singularities (and the same singularities at infinity) whose complements have non-isomorphic fundamental 
groups, one of them being free. 
Our examples show that the position of nodes also affects the topology of the embedding of projective curves.
Twisted Alexander polynomials with respect to finite $\SU(2)$ representations show to be useful for this purpose,
since all their abelian invariants are the same for both fundamental groups.
\end{abstract}

\maketitle
%\tableofcontents

\section{Introduction}

In a series of recent papers by different
authors~(\cite{MR4349417,Catanese-Fibred,Arapura-toward,ji-Eva-orbifold})
there is a growing interest in studying the connection between the geometric properties of
a smooth connected complex quasi-projective surface~$U$ and its fundamental group.
This paper continues~\cite{ji-Eva-orbifold}, which deals with the case where $\pi_1(U)$ is a free
product of cyclic groups.

One of the main results by the authors in this direction is \cite[Theorem 1.3]{ji-Eva-orbifold},
which gives geometric necessary conditions for a curve inside of a smooth projective surface to
have a free product of cyclic groups as the fundamental group of its complement.
The flavor of this result can be summarized as follows:
if $X$ is a smooth connected complex projective surface and $D\subset X$ is a curve such that
$\pi_1(X\setminus D)$ is a free product of cyclic groups, then there exists a smooth projective curve
$S$ and an admissible (see Definition~\ref{dfn:admissible}) map $F:X\setminus\mathcal B\to S$
inducing an isomorphism $\pi_1(X\setminus D)\to \pi^{\orb}_1(S)$, where $\mathcal B$ is a finite set
of points in $D$ and $\pi^{\orb}_1(S)$ is the orbifold fundamental group for some orbifold structure
on $S$ induced by the multiplicity of the fibers of $F_{|X\setminus D}$. Moreover, $D$ is the closure
of a finite union of fibers of $F$ and of special null-homology horizontal components.
In the particular case where $X=\PP^2$, the curve $D$ is the closure of a finite union of fibers of $F$,
which we refer to as a fiber-type curve.
See Theorem~\ref{thm:main-P2} in section~\ref{sec:fiber-type} for the precise statement when $X=\PP^2$.

The question of whether or not, or to what extent, this is a sufficient condition arises.
For instance, if $X=\PP^2$, $U:=X\setminus D$, and $D$ contains only generic fibers of an admissible map
(satisfying Condition~\ref{cond:nice-pencil} in section~\ref{sec:admissible}), then $\pi_1(U)$ is a
free product of cyclic groups (see~\cite[Theorem 1.2]{ji-Eva-orbifold}). However on the other end, if $D$
contains all atypical fibers of an admissible map, then $\pi_1(U)$ tends to not be a free product of cyclic
groups, but rather a semidirect product of the fundamental groups of two smooth quasi-projective curves
\cite[Lemma 2.18, Corollary 2.19]{ji-Eva-orbifold}. Thus, it is natural to ask if it is possible to formulate
sufficient conditions for the fundamental group of a curve complement to be a free product of cyclic groups
in terms of the topology of the fibers contained in $D$ in the case when some of these fibers contain only
mild singularities (for example nodes).

In section~\ref{sec:threshold}, we describe this problem in more precise terms.
Given an admissible map $F:X\setminus\mathcal B\to S$ and $B_F\subset S$ the set of atypical values of $F$,
we define $\cT_F\subset\mathcal P(B_F)$ the family of subsets $T$ of $B_F$ such that $F_*$ induces an isomorphism
of fundamental groups when restricted to $X\setminus F^{-1}(T)$.
The set of all maximal elements of $\cT_F$ with the inclusion order is called the threshold of atypical values.
If $X=\PP^2$, this set $\cT_F$ completely describes the remarkable phenomenon of $\pi_1(X\setminus D)$ being a
free product of cyclic groups for curves $D$ which are the closure of two or more fibers of~$F$
(see Proposition~\ref{prop:threshold-proj}). A good understanding of the threshold set of atypical values can
provide a source of Zariski pairs of fiber-type curves, where the fundamental group of one of them is a free
product of cyclic groups.

The main result of this paper is shown in section~\ref{sec:zar-pairs} where we explore whether or not the
``free product of cyclic groups'' condition for $\pi_1(X\setminus D)$ could be determined by any topological
information provided by the atypical fibers of $F$ which lie in $D$. To answer this question in the negative,
we give in section~\ref{sec:zar-pair} a special example of a Zariski pair of curves, both of which consist of
two homeomorphic fibers in $\CC^2$ of the same polynomial map, plus the line at infinity. A Zariski pair
(see~\cite{Artal-couples}) refers to two algebraic curves $C_1, C_2\subset \PP^2$ with homeomorphic regular
neighborhoods, but non-homeomorphic embeddings, that is, $(\PP^2,C_1)\not\cong(\PP^2,C_2)$.
In particular, both curves have the same \emph{combinatorial type} (see the discussion after Definition 2
in~\cite{Artal-ji-Tokunaga-survey-zariski}) which includes the same degree and the same local types of singularities.
Analogously, if $D_1, D_2\subset \CC^2$ are two affine curves and one identifies $\CC^2\equiv\PP^2\setminus L_\infty$
where $L_\infty$ denotes the line at infinity, then the curves $D_1$ and $D_2$ form an \emph{affine Zariski pair} if
$D_1\cup L_\infty$ and $D_2\cup L_\infty$ form a Zariski pair in~$\PP^2$.

The example can be described as follows: consider the family of polynomials $f_\lambda(x,y)=x^6+y^6+6xy-\lambda$.
One can construct two curves $D_1=\{f_4\cdot f_{-4}=0\}\subset\CC^2$ and $D_2=\{f_4\cdot f_{4i}=0\}\subset\CC^2$.
Note that each $D_i$ is a union of two disjoint irreducible nodal affine sextics with six nodes each.
One obtains the following in~Theorem~\ref{thm:Zariskipair}.

\begin{thm}
The affine nodal curves $D_1$ and $D_2$ form an affine Zariski pair.
Moreover, $\pi_1(\CC^2\setminus D_1)\not\cong\pi_1(\CC^2\setminus D_2)=\FF_2$ the free group of rank 2.
\end{thm}

This exhibits two fiber-type curves as unions of two fibers of the polynomial map $f:\CC^2\to\CC$ defined by
$f(x,y)=x^6+y^6+6xy$. Each fiber is a sextic with six nodes. The main result shows that the particular choice
of the two fibers given in $D_1$ (resp. $D_2$) produces a non-free (resp. a free) group.

As a word of caution between the affine and projective cases, it is known that projective nodal curves have abelian
fundamental groups (cf.~\cite{Fulton-Fundamental,deligne-groupe}) and hence there cannot be a Zariski pair of
projective nodal curves.

One can also consider the closure of these curves in $\PP^2$. This is done in section~\ref{sec:projective-ZP},
were we prove that the union of the two sextics for both of these curves yields two projective curves of degree
$12$ which also form a Zariski pair. In particular, consider $F_\lambda(X,Y,Z)$ the homogenization of $f_\lambda(x,y)$
and the projective curves $C_1=\{F_4\cdot F_{-4}=0\}\subset\PP^2$ and $C_2=\{F_4\cdot F_{4i}=0\}\subset\PP^2$.
We prove the following in Theorem~\ref{thm:projPair}.

\begin{thm}
The projective curves $C_1$ and $C_2$ form a Zariski pair.
Moreover, $\pi_1(\PP^2\setminus C_2)=\ZZ*\ZZ_6\not\cong\pi_1(\PP^2\setminus C_1)$.
\end{thm}

Note that other examples of Zariski pairs where there is a connection between the topology of their
complement and the position of singular points (including nodes) have been exhibited before
(see~\cite{Tokunaga-Geometry,Bannai-Shirane-Nodal}). However, the remarkable aspect of the present
family of examples stands for the fact that the only special position of singularities is given by the
nodes.

In section~\ref{sec:threshold-example} we classify all possible fundamental groups of complements
of fiber-type curves obtained from fibers of the rational map $F:\PP^2\dashrightarrow\PP^1$ introduced above. This completely
characterizes all the (infinitely many) Zariski pairs whose irreducible components are fibers of this morphism,
and such that the fundamental groups of their complements satisfy that one of them is a free product of cyclic
groups (see Theorem~\ref{thm:allPairs}).

\begin{thm}\label{thm:allPairs-intro}
Let $F:U\to\PP^1$ be given by $F=[x^6+y^6+6xyz^4:z^6]$, where $U=\PP^2\setminus\{[x:y:0]\mid x^6+y^6=0\}$.
Let $B_F\subset\PP^1$ be the set of atypical values of $F$. For every finite set $A\subset\PP^1$, let
$C_A=\overline{F^{-1}(A)}\subset\PP^2$.

Then, the following is a complete list of all Zariski pairs of curves $(C_A,C_{A'})$ such that
the fundamental group of the complement of $C_A$ in $\PP^2$ is a free product of cyclic groups:

\vspace*{7pt}
\begin{center}
\begin{tabular}{|c|c|c|}
$\pi_1(\PP^2\setminus C_{A})$ & $A$ & $A'$ \\
\hline
$\FF_{r+1}*\ZZ_6$ & $\{\varepsilon_14,\varepsilon_24 \cI\}\cup B$ & $\{4\alpha,-4\alpha\}\cup B'$\\
\hline
$\FF_{r+2}$ & $\{\varepsilon_14,\varepsilon_24\cI,\infty\}\cup B$ & $\{4\alpha,-4\alpha,\infty\}\cup B'$\\
\hline
$\FF_{r+2}*\ZZ_6$ & $\{0, \varepsilon_14,\varepsilon_24\cI\}\cup B$ & $\{0,4\alpha,-4\alpha\}\cup B'$\\
\hline
$\FF_{r+3}$ & $\{0,\varepsilon_14,\varepsilon_24\cI,\infty\}\cup B$ & $\{0,4\alpha,-4\alpha,\infty\}\cup B'$\\
\hline
\end{tabular}
\end{center}
\vspace*{7pt}
where $B,B'\subset\PP^1\setminus B_F$ are finite, $r:=\# B= \# B'$,
$\varepsilon_1,\varepsilon_2\in\{\pm 1\}$, and $\alpha\in\{1,\cI\}$.
\vspace*{7pt}
\end{thm}

Finally, in section~\ref{sec:tw-alex} we present a discussion on the (twisted) Alexander-type invariants of the
affine Zariski pair. In particular, it is shown that both curve complements have the same Alexander invariants.
Note that the difference between $D_1$ and $D_2$ is only the position of their nodes (Lemma~\ref{lemma:nodes}).
The position of nodes is known not to have an effect on Alexander polynomials or characteristic
varieties~\cite{Libgober-characteristic}. However, twisted Alexander polynomials with respect to unitary
respresentations in $\SU(2)$ for these groups are different. To our knowledge, this is the first example in the
literature of a Zariski pair of curves whose complements have the same characteristic varieties and whose
fundamental groups have different twisted Alexander polynomials.

\section{Preliminaries}

\subsection{Admissible maps}\label{sec:admissible}
The following definition is found in Arapura~\cite{Arapura-geometry}.

\begin{dfn}\label{dfn:admissible}
	A surjective morphism $F:U\to S$ from a smooth 
	quasi-projective surface $U$ to a smooth projective curve $S$ is admissible if it admits a 
	surjective holomorphic enlargement $\hat F:\hat U\to S$ with connected fibers, 
	where $\hat U$ is a smooth compactification of $U$. 
\end{dfn}

\begin{rem}
	Let $F:U\to S$ be a surjective morphism from a smooth quasi-projective surface to a smooth projective curve. Then $F$ is admissible if and only if $F$ has  connected generic fibers. Indeed, if $F:U\to S$ is admissible and $\hat F:\hat U\to S$ is a holomorphic enlargement with connected fibers, the generic fiber of $\hat F$ is smooth by generic smoothness on the target. Hence, the generic fiber of $F$ is a Zariski open subset of a smooth connected compact curve, which is connected. For the other implication, suppose that $\hat F:\hat U\to S$ is a holomorphic enlargement of $F$, where $\hat U$ is a smooth compactification of $U$. Then, Stein factorization implies that $\hat F$ factors as a map with connected fibers $G:X\to S'$ and a surjective finite morphism $h:S'\to S$, where $S'$ can be taken to be a smooth projective curve. Since $F$ has connected generic fibers, and $F$ also factors through $h$, $h$ must be a branched cover of degree $d=1$, so $h$ is a homeomorphism and thus $\hat F$ has connected fibers.
\end{rem}

	Most of the maps that will appear in this paper will be dominant rational morphisms $F:\PP^2\dashrightarrow\PP^1$ such that the restriction to its maximal domain of definition is an admissible map. Any dominant rational morphism $F$ can be expressed by a surjective morphism $F=[f:g]:U\to\PP^1$ when restricted to its maximal domain of definition $U$, where $f,g\in\CC[x,y,z]$ are homogeneous polynomials of the same degree with no non-constant common factors, and  $U=\PP^2\setminus\mathcal B$, where $\mathcal B$ is the finite set $V(f)\cap V(g)$. In particular, most of the maps that will appear in this paper will be of the form described below:

\begin{proper}\label{cond:nice-pencil}
	 $F=[f_q^p:f_p^q]:U\to\PP^1$ is such that
	\begin{itemize}
		\item $p,q\in\ZZ_{\geq 1}$, with $\gcd(p,q)=1$,
		\item $f_p,f_q\in\CC[x,y,z]$ are homogeneous polynomials of degrees $p$ and $q$ respectively, with no non-constant common factors,
		\item neither $f_p$ nor $f_q$ is a $k$-th power of another polynomial in $\CC[x,y,z]$ for any $k\geq 2$,
		\item $U=\PP^2\setminus\mathcal B$, where $\mathcal B$ is the finite set $V(f_p)\cap V(f_q)\in\PP^2$,
		\item The map $F$ has no multiple fibers outside $\{[0:1],[1:0]\}$.
	\end{itemize}
\end{proper}

The last point in Condition~\ref{cond:nice-pencil} is necessary (up to change of coordinates in $\PP^1$) for $F$ to be admissible, as shown in the following remark.

\begin{rem}\label{rem:multipleFibers}
	 Let $F:U\to\PP^1$ be the restriction to its maximal domain of definition of a dominant rational map $F:\PP^2\dashrightarrow \PP^1$. If $F:U\to\PP^1$ is admissible, \cite[Proposition 2.8]{ji-Libgober-mw} implies that the number of multiple fibers of $F$ does not exceed $2$, so, after a change of coordinates in $\PP^1$, we may assume that the multiple fibers of $F$ lie over a subset of $\{[0:1],[1:0]\}$.
\end{rem}

\begin{rem}\label{rem:affine-admissible}
Let $f:\CC^2\to\CC$ be a non-constant polynomial map with connected generic fibers. Then, $f$ extends to an 
admissible map $F:\PP^2\setminus\mathcal B=[\overline f(x,y,z):z^d]\to\PP^1$, where $\overline f$ is the homogenization 
of $f$ with respect to the variable $z$, $d$ is the degree of $f$, and $\mathcal B=V(\overline f(x,y,z))\cap V(z)$.
\end{rem}

It is easy to construct admissible maps satisfying Condition~\ref{cond:nice-pencil}, as exemplified by the following result.

\begin{lemma}\label{lem:irreducibleAdmissible}
Let $F=[f_{kq}^p:f_{kp}^q]:U=\PP^2\setminus(V(f_{kq})\cap V(f_{kp}))\to\PP^1$  
such that $k\in\ZZ_{\geq 1}$, $\gcd(p,q)=1$, and $f_{kq}, f_{kp}\in\CC[x,y,z]$ are irreducible homogeneous polynomials of degree $kq$ and $kp$ respectively that are not constant multiples of one another.
Then, $F$ is an admissible map. Moreover, if $k=1$, then $F$ also 
 satisfies Condition~\ref{cond:nice-pencil}.
\end{lemma}
\begin{proof}
Let $\hat F:X\to\PP^1$ be a resolution of indeterminacies of the pencil $F:\PP^2\dashrightarrow\PP^1$. 
In particular, $X$ is a smooth simply connected projective surface. Using Stein factorization and the 
fact that $X$ is simply connected, $\hat F$ factors as $H\circ\hat G$, where $\hat G:X\to\PP^1$ has 
connected fibers and $H:\PP^1\to\PP^1$ is finite, generically $d:1$. Our goal is to show that $d=1$. 
Let $G:= \hat G_{|U}:U\to\PP^1$, which is an admissible map by construction.

The map $H$ (resp. $G$) is of the form $[h_1(x,y):h_2(x,y)]$ (resp. $[f(x,y,z):g(x,y,z)]$, 
where $h_1,h_2\in\CC[x,y]$ (resp. $f,g\in\CC[x,y,z]$) are homogeneous polynomials of the same degree 
with no non-constant common factors. Hence,
$$
f_{kq}^p=h_1(f,g),\quad f_{kp}^q=h_2(f,g).
$$
Since $f_{kq}$ and $f_{kp}$ are irreducible and every homogeneous polynomial in $\CC[x,y]$ decomposes as a product of 
linear forms, we have that there exist $[a:b], [c:d]\in\PP^1$ and $m,l\in\ZZ_{\geq 1}$ such that
$$
h_1(x,y)=(bx-ay)^m,\quad m\mid p,\quad h_2(x,y)=(dx-cy)^l,\quad l\mid q,
$$
and since $h_1$ and $h_2$ have the same degree, $m=l$. The condition that $\gcd(p,q)=1$ implies that $m=l=1$. 
Hence, $H$ is a change of coordinates in $\PP^1$, which concludes the proof of the fact that $F$ is admissible. 
By Remark~\ref{rem:multipleFibers}, if $k=1$, $F$ also satisfies Condition~\ref{cond:nice-pencil}, since the existence
of a multiple fiber over a point not in $\left\{[0:1],[1:0]\right\}$ would imply that $F$ does not have connected generic fibers.
\end{proof}

\begin{exam}
The result of Lemma~\ref{lem:irreducibleAdmissible} is no longer true if the irreducibility assumption is dropped. For example, if $f_p=x^p-y^p$ and $f_q=x^q+y^q$ and $p$ is odd, then $[f_p^q:f_q^p]:\PP^2\setminus\{[0:0:1]\}\to\PP^1$ is not an admissible map, since the closure of any of its fibers is a union of lines in $\PP^2$ going through the point $[0:0:1]$. 
\end{exam}

\subsection{Orbifold fundamental groups and morphisms}
Let $S$ be a smooth projective curve of genus $g$. We may endow it with an orbifold structure by choosing 
$\varphi:S\to\ZZ_{\geq 0}$ such that $\varphi(P)\neq 1$ only for a finite number of points. Let 
$\Sigma=\Sigma_0\cup\Sigma_+\subset S$ be the points for which $\varphi(P)= 0$ if $P\in \Sigma_0$ and 
$\varphi(Q)=m_{Q}>1$ if $Q\in \Sigma_+$. We will denote this structure by $S_{(n+1,\bar m)}$, where $n+1=\#\Sigma_0$, 
and $\bar{m}$ is a $(\#\Sigma_+)$-tuple whose entries are the corresponding $m_Q$'s. 

As a word of caution for the following definitions, the term orbifold might be misleading: we do not need to develop any theory of orbifolds or $V$-manifolds in this context, but rather use the orbifold structures to highlight the existence 
of multiple fibers of an admissible map. This will become clear throughout the section.

\begin{dfn}[Orbifold fundamental group of a smooth projective curve]
Let	$S_{(n+1,\bar m)}$ be a smooth projective curve endowed with an orbifold structure. The orbifold fundamental group of $S_{(n+1,\bar m)}$ is defined by
$$
\pi_1^{\orb}(S_{(n+1,\bar m)}):=\pi_1(S\setminus \Sigma)/
\langle \mu_P^{\varphi(P)}, P\in \Sigma\rangle,
$$
where $\mu_P$ is a meridian in $S\setminus \Sigma$ around $P\in \Sigma$, and $\langle \mu_P^{\varphi(P)}, P\in \Sigma\rangle$ is the normal subgroup generated by the $\mu_P^{\varphi(P)}$'s.
\end{dfn} 

Note that if $S$ has genus $g$, $\pi_1^{\orb}(S_{(n+1,\bar m)})$ is hence generated by
$
\{a_i,b_i\}_{i=1,\dots,g} \cup \{\mu_P\}_{P\in \Sigma}
$
and presented by the relations
\begin{equation}
	\label{eq:rels}
	\mu_P^{m_P}=1, \quad \textrm{ for } P\in \Sigma_+, \quad \textrm{ and } \quad 
	\prod_{P\in \Sigma}\mu_P=\prod_{i=1,\dots,g}[a_i,b_i].
\end{equation}
In particular,  if $\Sigma_0\neq \emptyset$, \eqref{eq:rels} shows that 
$\pi_1^{\textrm{orb}}(S_{(n+1,\bar m)})$ is a free product of cyclic groups as follows 
$$
\pi_1^{\textrm{orb}}(S_{(n+1,\bar m)})\cong \pi_1(S\setminus\Sigma_0)\Conv\left(
\Conv_{P\in\Sigma_+}\left(\frac{\ZZ}{m_P\ZZ}\right)\right)\cong
\FF_r*\ZZ_{m_1}*\dots*\ZZ_{m_s},
$$
where $r=2g-1+\# \Sigma_0=2g+n$, $s=\# \Sigma_+$, and $\bar m=(m_1,\dots,m_s)$. Note that any finitely generated free product of cyclic groups can be realized as an orbifold fundamental group of a smooth projective curve in this way.

\begin{dfn}[Orbifold morphism]
	Let $U$ be a smooth algebraic variety. A dominant algebraic morphism $F:U\to S_{(n+1,\bar m)}$ defines an 
	orbifold morphism if for all $P\in S$ such that $\varphi(P)>0$, the divisor $F^*(P)$ is a 
	$\varphi(P)$-multiple. 
\end{dfn}

\begin{dfn}[Maximal orbifold structures]
	Let $F:U\to S_{(n+1,\bar m)}$ be an orbifold morphism. The orbifold $S_{(n+1,\bar m)}$ is said to be maximal (with respect to $F$) 
	if $F(U)=S\setminus \Sigma_0$ and for all $P\in F(U)$ the divisor $F^*(P)$ is not an $n$-multiple for any $n > \varphi(P)$. 
\end{dfn}

The following result is well known (see~\cite[Prop. 1.4]{ACM-multiple-fibers}, for example)
\begin{rem}\label{rem:inducedorb}
Let $F:U\to S_{(n+1,\bar m)}$ be an orbifold morphism. Then, $F$ induces a morphism
$
F_*:\pi_1(U)\to\pi_1^{\orb}(S_{(n+1,\bar m)}).
$
Moreover, if the generic fiber of $F$ is connected, then $F_*$ is surjective.
\end{rem}

We end this section with important notational conventions that will be used throughout the rest of the paper:
\begin{itemize}
\item Whenever we refer to a dominant algebraic morphism $F:U\to S$ as an orbifold morphism, we will assume that $S$ is endowed with the orbifold structure that turns $F$ into an orbifold morphism in such a way that the orbifold structure is maximal.
\item The only orbifold structures that will appear are the maximal ones. 
\item In particular, if $F:U\to S$ is surjective and $B\subset S$, the notation $F_*:\pi_1\left(U\setminus F^{-1}(B)\right)\to\pi_1^{\orb}(S\setminus B)$ will refer to the morphism endowed by $F_{|U\setminus F^{-1}(B)}:U\setminus F^{-1}(B)\to S$ on (orbifold) fundamental groups, where $S$ is endowed with the maximal orbifold structure with respect to $F_{|U\setminus F^{-1}(B)}$ (which has $\Sigma_0=B$).
\end{itemize}

\subsection{Fiber-type curves}\label{sec:fiber-type}

\begin{dfn}[Fiber-type curve]\label{dfn:fiber-type}
Let $X$ be a smooth projective surface, and let $C$ be a curve in $X$. We say that a $C$ is a fiber-type curve if, for some complement of a finite number of points in $X$ called $U$, there exists an admissible map $F:U\to S$ to a smooth projective curve $S$ such that $C$ is the closure in $X$ of a finite number of fibers of $F$.
\end{dfn}

\begin{rem}\label{rem:affine-fiber-type}
Let $f:\CC^2\to\CC$ be a non-constant polynomial map with connected generic fibers. We will also refer to the union $C$ of a finite number of fibers of $f$ as an \emph{affine fiber-type curve}. Indeed, both points of view agree if we want to study the topology of their complements: $f$ extends to an admissible map $F:\PP^2\setminus\mathcal B=[\overline f(x,y,z):z^d]\to\PP^1$ as in Remark~\ref{rem:affine-admissible}. The complement in $\PP^2$ of the union of $\overline C$ and the line at infinity (which is the closure in $\PP^2$ of the fiber of $F$ over $[1:0]$) coincides with $\CC^2\setminus C$.
\end{rem}

As mentioned in the introduction, the curves in a smooth projective surface $X$ whose complements have a fundamental group which is a free product of cyclic groups are (essentially) fiber-type curves (with perhaps some extra null-homotopic irreducible components). In fact, if $X=\PP^2$, the curves must actually be fiber-type curves, as exemplified by the following result:

\begin{thm}[Main Theorem for curves in $\PP^2$~{\cite[Cor.~3.14]{ji-Eva-orbifold}}]\label{thm:main-P2}
	Let $D$ be a curve in $\PP^2$.
	Suppose that $\pi_1(\PP^2\setminus D)$ is a free product of cyclic groups. 
	Then, there exists $r\geq 0$ and $m_1\geq m_2\geq 1$ with $\gcd(m_1,m_2)=1$ such that $\pi_1(\PP^2\setminus D)\cong\FF_r*\ZZ_{m_1}*\ZZ_{m_2}$. Moreover, there exists $F=[f_{q}^{p}:f_{p}^{q}]:U=\PP^2\setminus\mathcal B\to \PP^1$ an admissible map as in Condition~\ref{cond:nice-pencil}
	 such that:
	\begin{enumerate}
		\enet{\rm(\roman{enumi})}
		\item \label{thm:main-simply-1}
		$F$ induces an orbifold morphism
		$$
		F_|:\PP^2\setminus D\to \PP^1_{(r+1,\bar m)},
		$$
		where $\PP^1_{(r+1,\bar m)}$ is maximal with respect to $F_|$, and $\bar m$ takes the following value: $(m_1,m_2)$ if $m_2\geq 2$, $(m_1)$ if $m_1\geq 2$ and $m_2=1$, and it is empty if $m_1=m_2=1$.
		\item \label{thm:main-simply-2} 
		$F_*:\pi_1(\PP^2\setminus D)\to \pi_1^{\orb}(\PP^1_{(r+1,\bar m)})$ is an isomorphism.
		\item \label{thm:main-simply-3}
		$D=\overline{F^{-1}(\Sigma_0)}\subset\PP^2$ is a fiber-type curve which is the closure of the union of $r+1$ irreducible fibers of $F$.
	\end{enumerate}
	 More concretely,
	\begin{enumerate}
		\item\label{thm:main-simply-4-1} If $m_1>m_2>1$, then $p=m_1$, $q=m_2$ and the pencil $F=[f_{m_2}^{m_1}:f_{m_1}^{m_2}]$ 
		has exactly two multiple fibers corresponding to $[0:1],[1:0]\notin \Sigma_0$.
		\item\label{thm:main-simply-4-2} 
		If $m_1>m_2=1$, then $p=m_1$, $[1:0]\in \Sigma_0$, and the pencil $F=[f_{q}^{m_1}:f_{m_1}^{q}]$ 
		has at least one multiple fiber corresponding to $[0:1]\notin \Sigma_0$.
		\item\label{thm:main-simply-4-3} 
		If $m_1=m_2=1$, then $F=[f_{q}^{p}:f_{p}^{q}]$ has at most two multiple 
		fibers corresponding to $[0:1],[1:0]\in \Sigma_0$.
	\end{enumerate}
\end{thm}

\begin{rem}\label{rem:free}
	This result implies that, if an affine curve satisfies that the fundamental group of its complement in $\CC^2$ is a free product of cyclic groups, then it is a free group, and moreover, the affine curve must consist of a finite union of irreducible fibers of a 
	polynomial map $f:\CC^2\to\CC$.
\end{rem}

The goal of Section~\ref{sec:zar-pairs} is to find Zariski pairs in this setting, where the irreducible components of both curves will be closures of fibers of the same admissible map.

We introduce the following notation.

\begin{notation}
Let $U$ be a smooth quasi-projective surface and let $S$ be a smooth projective curve. If $F:U\to S$ is an admissible map 
 and  $B\subset S$ is a finite set, we let $U_B$ be
 $$U_B=U\setminus F^{-1}(B).$$
\end{notation}

\begin{rem}
Let $F=[f:g]:U=\PP^2\setminus\mathcal B\to \PP^1$ be an admissible map, where $f,g\in\CC[x,y,z]$ are homogeneous polynomials of degree $d\geq 1$ with no non-constant common factors, and $\mathcal B=V(f)\cap V(g)$. Let $B$ be a non-empty finite subset of $\PP^1$, and let $D=\overline{F^{-1}(B)}$. Then $U_B=\PP^2\setminus D$.
\end{rem}

The following result can be found in \cite[Thm.~1.4, Cor.~4.9]{ji-Eva-orbifold}.

\begin{thm}[Addition-Deletion Lemma]
\label{thm:generic-fiber-orbi}
Let $U$ be a smooth quasi-projective surface and let $F:U\rightarrow S$ be an admissible map to a smooth projective curve $S$. 
Assume $B\subset S$, where $\# B=n\geq 1$, and let $P\in S\setminus \left(B_F\cup B\right)$, where $B_F$ is the (finite) set of atypical values of $F$.

Then the following are equivalent:
\begin{enumerate}
 \item\label{1}
$F_*:\pi_1(U_B)\to\pi_1^{\orb}(S\setminus B)$ is an isomorphism,
 \item\label{2} 
$F_*:\pi_1(U_{B\cup\{P\}})\to\pi_1^{\orb}\left(S\setminus (B\cup\{P\})\right)$ is an isomorphism,
\end{enumerate}
and in that case, 
$$\pi_1(U_{B\cup\{P\}})\cong \ZZ*\pi_1(U_B).$$
Moreover, the implication \eqref{2}$\Rightarrow$\eqref{1} also holds if $P\in B_F\setminus B$.
\end{thm}

\begin{rem}
We will refer to \eqref{1}$\Rightarrow$\eqref{2} as the Addition Lemma (which is true for generic fibers) 
and to \eqref{2}$\Rightarrow$\eqref{1} as the Deletion Lemma (which is true for all fibers).
\end{rem}

\section{Threshold of atypical values}\label{sec:threshold}
Let $U$ be a smooth quasi-projective surface and let $S$ be a smooth projective curve. Let $F:U\to S$ 
be an admissible map and let $B\subset S$ be a finite set of points. The goal of this section is to study 
how the presence of atypical values of $F$ in $B$ relates to the induced morphism 
$F_*:\pi_1(U_B)\to \pi_1^{\orb}(S\setminus B)$ being an isomorphism.

To study this problem, we start by defining the set
$$\cT_{F}:=\left\{T\subset B_F \left|
\begin{array}{c}
F_*:\pi_1(U_{\{P\}\cup T})\to\pi_1^{\orb}\left(S\setminus(\{P\}\cup T)\right)\\
\textrm{ is an isomorphism}\end{array}\right.\right\},$$
where $P\in S\setminus B_F$. The following result summarizes some of the properties of $\cT_{F}$.

\begin{lemma}\label{lem:preThreshold}
Let $U$ be a smooth quasi-projective surface and let $S$ be a smooth projective curve.
Let $F:U\to S$ be an admissible map, let $B_F\subset S$ be its set of atypical values,
and let $P\in S\setminus B_F$. The following hold:
\begin{enumerate}
\item\label{pre1} The set $\cT_{F}$ is independent of the choice of $P\in S\setminus B_F$.
\item\label{pre2} If $T\in \cT_{F}$ and $T'$ is such that $T'\subset T\subset B_F$, then $T'\in \cT_{F}$.
\item\label{pre3} $\cT_{F}\neq\emptyset$ if and only if $F_*:\pi_1(U_{P})\to\pi_1^{\orb}(S\setminus\{P\})$ is an isomorphism.
\end{enumerate}
\end{lemma}

\begin{proof}
Let $P,Q\in S\setminus B_F$, let $T\subset B_F$ and suppose that $T\in\cT_F$, that is,
$$
F_*:\pi_1(U_{\{P\}\cup T})\to
\pi_1^{\orb}\left(S\setminus (\{P\}\cup T)\right)\quad\text{is an isomorphism.}
$$
By the Addition Lemma (Theorem~\ref{thm:generic-fiber-orbi}), one can add 
the generic fiber of $F$ at $Q$, that is,
$F_*:\pi_1(U_{\{P,Q\}\cup T})\to \pi_1^{\orb}(S\setminus \{P,Q\}\cup T)$
is an isomorphism. Using now the Deletion Lemma, the fiber at $P$ can be removed 
and $F_*:\pi_1(U_{\{Q\}\cup T})\to \pi_1^{\orb}(S\setminus \{Q\}\cup T)$
is an isomorphism. This proves part~\eqref{pre1}. Part \eqref{pre2} follows directly from the 
Deletion Lemma. Part~\eqref{pre2} implies that $\cT_{F}\neq\emptyset$ if and only if $\emptyset\in\cT_{F}$,
which implies part~\eqref{pre3}.
\end{proof}

The previous result suggests the following definition.

\begin{dfn}
Let $U$ be a smooth quasi-projective surface, let $S$ be a smooth projective curve.
Let $F:U\to S$ be an admissible map and let $B_F\subset S$ be its set of atypical values.
We say that a set $T\subset B_F$ is a
\emph{threshold set of (atypical) values for $F$}
if it is a maximal set of $\cT_{F}$ (with respect to the inclusion of sets).
\end{dfn}

\begin{rem}[Non-empty $\cT_F$, projective case]\label{rem:non-empty-proj}
Let  $F=[f_q^p:f_p^q]:U=\PP^2\setminus\mathcal B\to \PP^1$ be an admissible map satisfying 
Condition~\ref{cond:nice-pencil}. If $P\in \PP^1\setminus B_F$, then 
$F_*: \pi_1(U_{P})\to\pi_1^{\orb}\left(S\setminus\{P\}\right)$ is an isomorphism by 
\cite[Theorem 1.2]{ji-Eva-orbifold}, hence, 
Lemma~\ref{lem:preThreshold}\eqref{pre3} implies~$\cT_F\neq\emptyset$.
\end{rem}

\begin{exam}[Empty $\cT_F$]\label{ex:empty}
Consider $\mathcal B=V(h_2)\cap V(h_4)$, where $h_d$ is an irreducible homogeneous polynomial in 
$\CC[x,y,z]$ of degree $d$ for $d=2,4$. Define $U=\PP^2\setminus\mathcal B$, $S=\PP^1$, and $F:U\to S$
is described as $F=[h_2^2:h_4]$. By Lemma~\ref{lem:irreducibleAdmissible}, $F$ is an admissible map, however
$F_*:G_1:=\pi_1(U_P)\to\pi_1^{\orb}(S\setminus\{P\})=:G_2$ is not an 
isomorphism for any $P\notin B_F$, since the abelianizations $G_1/G'_1=\ZZ_4$, $G_2/G'_2=\ZZ_2$ 
are not isomorphic. Hence, Lemma~\ref{lem:preThreshold}\eqref{pre3} implies~$\cT_F=\emptyset$.
\end{exam}

The purpose of $\cT_F$ is to characterize the property that 
$F_*:\pi_1(U_B)\to \pi_1^{\orb}(S\setminus B)$ defines an isomorphism, regardless of whether or not $B$ 
contains a generic fiber, as the following result shows.

\begin{prop}
\label{prop:BinBF}
Let $U$ be a smooth quasi-projective surface, $S$ a smooth projective curve and $F:U\to S$ be an admissible map. 
Consider $B\subset S$ a finite non-empty set. Then the following are equivalent:
\begin{enumerate}
\item\label{threshold2a}
$F_*:\pi_1(U_B)\to \pi_1^{\orb}(S\setminus B)$ is an isomorphism.
\item\label{threshold3a}
There exists $T\in \cT_{F}$ such that $B\cap B_F\subset T$.
\item\label{threshold4a}
$B\cap B_F\in \cT_{F}$.
\end{enumerate}
\end{prop}

\begin{proof}
Note that \eqref{threshold3a} $\Leftrightarrow$ \eqref{threshold4a} follows immediately from 
Lemma~\ref{lem:preThreshold}\eqref{pre2}.

To show \eqref{threshold2a} $\Leftrightarrow$ \eqref{threshold4a} note that the 
Addition-Deletion Lemma (Theorem~\ref{thm:generic-fiber-orbi}) implies that
$$F_*:\pi_1(U_{B})\to \pi_1^{\orb}(S\setminus B)$$ 
is an isomorphism if and only if
$$F_*:\pi_1(U_{(B\cap B_F)\cup\{P\}})\to \pi_1^{\orb}\left(S\setminus \left((B\cap B_F)\cup\{P\}\right)\right)$$
is an isomorphism for any $P\in S\setminus (B_F\cup B)$, which is true iff 
$B\cap B_F\in \cT_{F}$.
\end{proof}

In this paper, we will compute the threshold sets of atypical values in cases which fall under the realm of Remark~\ref{rem:non-empty-proj}. In these cases, the threshold 
sets of atypical values contain more information about the fundamental groups of the complements of the 
fiber-type curves that arise from $F$ than the definition of a threshold set of atypical values may suggest, 
as exemplified by the following result, which may be used in combination with Proposition~\ref{prop:BinBF}. 

\begin{prop}\label{prop:threshold-proj}
	Let $F=[f:g]:U=\PP^2\setminus\mathcal B\to \PP^1$ be an admissible map, where $f,g\in\CC[x,y,z]$ are 
	homogeneous polynomials of degree $d\geq 1$ with no non-constant common factors, and $\mathcal B=V(f)\cap V(g)$. 
	Let $B\subset \PP^1$ be such that $\#B\geq 2$. Then, the following are equivalent:
	\begin{enumerate}
\item\label{threshold1} $\pi_1(U_B)$ is a free product of cyclic groups.
\item\label{threshold2} $F$ satisfies \ref{thm:main-simply-1}--\ref{thm:main-simply-3} of Theorem~\ref{thm:main-P2}, 
so in particular, 
$$F_*:\pi_1(U_B)=\pi_1\left(\PP^2\setminus\overline{F^{-1}(B)}\right)\to \pi_1^{\orb}(\PP^1\setminus B)
\quad \text{is an isomorphism.}$$
Moreover, possibly after a change of coordinates in $\PP^1$, $F$ also satisfies Condition~\ref{cond:nice-pencil}.
	\end{enumerate}
\end{prop}

\begin{proof}
The implication \eqref{threshold2} $\Rightarrow$ \eqref{threshold1} is trivial. 
Let us now see the implication \eqref{threshold1} $\Rightarrow$ \eqref{threshold2} 
if $\#B\geq 2$. Assume that $\pi_1(U_B)$ is a free product of cyclic groups. 
By Theorem~\ref{thm:main-P2} there exists a pencil $G=[g_u^v:g_v^u]$ satisfying Condition~\ref{cond:nice-pencil} 
and \ref{thm:main-simply-1}--\ref{thm:main-simply-3} of Theorem~\ref{thm:main-P2}. In particular, every curve of
the form $\overline{F^{-1}(P)}\subset\PP^2$ for $P\in B$ is the union of the closure of finitely many irreducible
fibers of $G$. Let $[\alpha_1:\beta_1],[\alpha_2:\beta_2]$ be distinct points in $B$. Then,
\begin{align*}
\beta_1 f-\alpha_1 g&=g_u^{k_1}g_v^{k_2}\prod_{i=1}^{k_3}(b_ig_u^v-a_ig_v^u)^{l_i},\\
\beta_2 f-\alpha_2 g&=g_u^{k_4}g_v^{k_5}\prod_{j=1}^{k_6}(d_jg_u^v-c_jg_v^u)^{m_j},
\end{align*}
for some $k_j\in\ZZ_{\geq 0}$, $j=1,\ldots, 6$ with $k_1+k_2+k_3\geq 1$, $k_4+k_5+k_6\geq 1$, 
$l_i\in\ZZ_{\geq 1}$ for all $i=1,\ldots, k_3$, $m_j\in\ZZ_{\geq 1}$ for all $j=1,\ldots, k_6$, and 
$[a_i:b_i], [c_j:d_j]$ distinct points in $\PP^1\setminus\{[1:0],[0:1]\}$ for $i=1,\ldots, k_3$ and $j=1,\ldots,k_6$.

Note that, since $G$ satisfies \ref{thm:main-simply-3} of Theorem~\ref{thm:main-P2} and Condition~\ref{cond:nice-pencil}, $b_ig_u^v-a_ig_v^u$ and $d_jg_u^v-c_jg_v^u$ are irreducible polynomials for all $i=1,\ldots, k_3$ and for all $j=1,\ldots, k_6$.  Looking at the degrees of the homogeneous polynomials in the previous two equations, we obtain that
$$
d=k_1u+k_2v+uv\sum_{i=1}^{k_3}l_i=k_4u+k_5v+uv\sum_{j=1}^{k_6}m_i.
$$
Since $u$ and $v$ are coprime, the last equality implies that $v\mid (k_1-k_4)$ and $u\mid (k_2-k_5)$. Note that, since different fibers of $F$ do not have any irreducible components in common, at least one element of the sets $\{k_1,k_4\}$ and $\{k_2,k_5\}$ is $0$, so $v\mid k_1,k_4$ and $v\mid k_2,k_5$. Hence, there exist homogeneous polynomials $h_1,h_2\in\CC[x,y]$ of the same degree $d'$ such that $f=h_1(g_u^v,g_v^u)$ and $g=h_2(g_u^v,g_v^u)$. In other words, $F$ factors as the composition of $G$ and $[h_1:h_2]:\PP^1\to\PP^1$. Since $F$ is admissible, then $d'=1$, and hence $[h_1:h_2]$ is a change of coordinates in $\PP^1$. In particular, $F$ satisfies Condition~\ref{cond:nice-pencil} (after possibly a change of coordinates in $\PP^1$), and also \ref{thm:main-simply-1}--\ref{thm:main-simply-3} of Theorem~\ref{thm:main-P2}, including the fact that
$$
F_*:\pi_1(U_B)\to\pi_1^{\orb}(\PP^1\setminus B)
$$
is an isomorphism.
\end{proof}

In other words, Propositions~\ref{prop:BinBF} and \ref{prop:threshold-proj} show that finding all the threshold 
sets of atypical values for an admissible map $F$ amounts to completely characterizing when the fundamental group of
$U_B=\PP^2\setminus \overline{F^{-1}(B)}$ is a free product of cyclic groups for all finite $B\subset\PP^1$ 
such that $\# B\geq 2$.

\begin{rem}
\label{rem:case1}
	If $\# B=1$ and $B\subset \PP^2\setminus B_F$, the result of Proposition~\ref{prop:threshold-proj} 
	holds by \cite[Theorem~1.2]{ji-Eva-orbifold}.
\end{rem}

However, if $\# B=1$ and $B\subset B_F$, the result is no longer true:

\begin{exam}
	Let $f_2, f_3\in\CC[x,y,z]$ be irreducible homogeneous polynomials of degrees $2$ and $3$ respectively. 
	By Lemma~\ref{lem:irreducibleAdmissible}, $[f_2^3:f_3^2]$ restricted to its maximal domain of definition 
	is an admissible map. By \cite[Theorem 1.2]{ji-Eva-orbifold}, 
	$\pi_1(\PP^2\setminus V(f_2^3+\lambda f_3^2))\cong\ZZ_2*\ZZ_3$ for $\lambda\in\CC$ generic. In particular, 
	$f_2^3+\lambda f_3^2$ is an irreducible degree $6$ homogeneous polynomial. Let $f_5\in \CC[x,y,z]$ be an 
	irreducible degree $5$ homogeneous polynomial. By Lemma~\ref{lem:irreducibleAdmissible}, the restriction 
	of the pencil $F=[(f_2^3+\lambda f_3^2)^5:f_5^6]$ to its maximal domain of definition is an admissible map. 
	For this $F$, if $B=\{[0:1]\}$ we have that $\pi_1(U_B)=\ZZ_2*\ZZ_3$ is a free product of cyclic groups, but
	$$
	F_*:\pi_1(U_B)\to\pi_1^{\orb}(\PP^1\setminus B)=\ZZ_6
	$$
	is not an isomorphism.
\end{exam}

\section{Zariski pairs of fiber-type curves}\label{sec:zar-pairs}
Consider $F:\PP^2\dashrightarrow \PP^1$ the rational morphism defined as $[\bar f(x,y,z):z^6]$, 
where $f(x,y)=x^6+y^6+6xy$ and $\bar f(x,y,z)=x^6+y^6+6xyz^4$. 
This rational map is not well defined on the base points $\mathcal B=\{[x:y:0]\in \PP^2 \mid x^6+y^6=0\}$. 
The generic fiber of the resulting admissible map $F:U\to\PP^1$, where $U=\PP^2\setminus\mathcal{B}$,
is a smooth sextic (with six points removed). The set of atypical values of $F$ is 
$B_{F}=\{0,\infty,\pm 4,\pm 4 \cI\}\subset\PP^1\equiv\CC\cup\{\infty\}$. The fiber $F^{-1}(\infty)$
is the only multiple fiber and it has multiplicity six. The fiber $F^{-1}(0)$ is a singular sextic with one node. 
The remaining atypical fibers are irreducible sextics with six nodes. In particular, $F$ is an admissible map satisfying Condition~\ref{cond:nice-pencil}. 
For any finite set $B\subset \PP^1$ we will denote $C_B:=\overline{F^{-1}(B)}$. 
The purpose of this section is to study $\pi_1(U_B)$ for $U_B:=\PP^2\setminus C_B$ for different choices
of~$B$. In particular, we will exhibit a Zariski pair of affine nodal curves and another Zariski pair of projective 
sextics of fiber-type. 

\subsection{A Zariski pair of affine nodal curves}\label{sec:zar-pair}
Using the notation presented above, we consider the curve $C_{B_1}$ (resp. $C_{B_2}$) for the set 
$B_1=\{4,-4,\infty\}$ (resp. $B_2=\{4, 4\cI,\infty\}$). Using the identification 
$\PP^2\setminus\{z=0\}=\PP^2\setminus \overline{C_\infty}\equiv\CC^2$ we can consider $U_{B_j}$ as 
the complement in $\CC^2$ of the affine curves~$D_j$ defined as a union of the two fibers over 
$B_j\setminus\{\infty\}$ of the polynomial map $(x,y)\mapsto f(x,y)=x^6+y^6+6xy$, for $j=1,2$. In this context one has the following.

\begin{thm}
\label{thm:Zariskipair}
The curves $(D_1,D_2)$ described above form a Zariski pair of affine nodal curves of degree 12. 
Moreover, $\pi_1(U_{B_2})=\FF_2$ whereas 
\begin{equation}
\label{eq:presentation1}
\pi_1(U_{B_1})=\langle x,y,u,v: [u,x]=[v,y]=1, u=[v,x], v=[u,y]\rangle
\end{equation}
fits in the following free-by-free extension 
\begin{equation}
\label{eq:presentation2}
1\to \ZZ u * \ZZ v \to \pi_1(U_{B_1})\to \ZZ x * \ZZ y \to 1
\end{equation}
and is not free.
\end{thm}

\begin{proof}
Since the coefficients of the equations of the curves can be given in a number field such as $\QQ(\cI)$, 
one can obtain presentations of the fundamental groups accurately using the package \textit{Sirocco}
in the computer software SageMath\footnote{A program to do so is available in the public repository:\\
\texttt{https://riemann.unizar.es/\~{}jicogo/software/AffineZariskiPair.ipynb} (Jupyter version),\\
\texttt{https://riemann.unizar.es/\~{}jicogo/software/AffineZariskiPair.sage} (plain text version).}
\cite{Marco-sirocco}.

In order to give a comprehensive proof of this result we will perform a series of transformations to $D_j$ 
in order to simplify it and be able to calculate its braid monodromy and apply Zariski-Van Kampen's method.
Consider the double cover $\CC^2\ \rightmap{\sigma}\ \CC^2$ of symmetries defined as $\sigma(x,y)=(xy,x+y)=(u,v)$. 
The discriminant of $\sigma$ is given by $\Delta_\sigma(u,v)=v^2-4u$ and $\sigma_|$ defines an unramified double 
cover outside $\sigma^{-1}(\Delta_{\sigma})=x-y=0$. Note that $f(x,y)=g(\sigma(x,y))$, where
$g(u,v)=v^6-6v^4u+9v^2u^2-2u^3+6u$.
Consider $\CC^2\ \rightmap{\delta}\ \CC^2$ the cover $\delta(u,v)=(u,v^2)$ associated with the reflection 
over $\ell_1=\{v=0\}$, which is an unramified double cover outside of the preimage of $\ell_1$. One obtains 
that $f(x,y)=(h\circ\delta\circ\sigma)(x,y)$, where $h(u,v)=v^3-6v^2u+9vu^2-2u^3+6u$. In order to compute
$G_j=\pi_1(U_{B_j})$, we will first calculate 
$\tilde G_j=\pi_1(\CC^2\setminus (H_{B_j\setminus\{\infty\}}\cup \ell_1\cup \ell_2))$, 
where $H_B=h^{-1}(B)$ and $\ell_2=\{v=4u\}$. Once $\tilde G_j$ is computed one can obtain $G_j$ after 
a sequence of index 2 subgroups associated with the double covers and 
quotients by meridians in order to eliminate the ramification locus.

Note that the polynomial map $h$ produces an elliptic fibration whose generic fiber is a smooth cubic. 
This fibration is ramified at $B_h=\{\pm 4,\pm 4 \cI\}$ whose fibers are of type $I_1$ (nodal cubics).
In addition, the line $\ell_1$ (resp. $\ell_2$) is tangent to the fibers $H_B$, for $B=\{0,\pm 4\}$ 
(resp. $B=\{\pm 4\cI\}$). 
This implies that $f^{-1}(\lambda)$, $\lambda\in B_h$ contains 4 nodes coming from the preimages of the nodes 
at $H_\lambda$ plus two nodes coming from the tangency at $\ell_1 \cap H_\lambda$ (resp. $\ell_2 \cap H_\lambda$).
Finally, the fiber $f^{-1}(0)$ contains only one node since the tangency $\ell_1 \cap H_\lambda$ produces a node 
after the preimage of $\delta$, which is on the ramification locus of $\sigma$ and hence produces only one node 
after the preimage by~$\sigma$.

\begin{center}
	\begin{figure}[ht]
		\includegraphics[scale=.4]{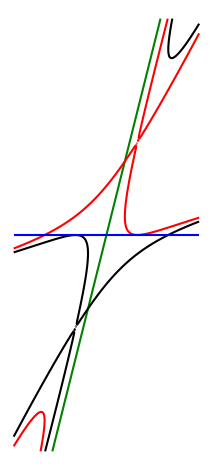}
		\caption{Real curve $H_{B_1\setminus\{\infty\}}\cup \ell_1\cup\ell_2$}
		\label{fig:curvareal}
	\end{figure}
\end{center}
Using the real picture (see Figure~\ref{fig:curvareal}) one can obtain a braid monodromy for 
$H_{B_1\setminus\{\infty\}}\cup \ell_1\cup\ell_2$ 
and use Zariski-Van Kampen's method to obtain a presentation of the fundamental group 
$\tilde G_1=\pi_1(\CC^2\setminus (H_{B_1\setminus\{\infty\}}\cup \ell_1\cup \ell_2))$ as 
\begin{equation}
\label{eq:tildeG1}
\tilde G_1=\left\langle \gamma_1,\gamma_2,\gamma_+,\gamma_-: 
\array{l}
[\gamma_1,\gamma_2]=[\gamma_2,\gamma_\pm]=[\gamma^2_1,\gamma_\pm]=1\\
(\gamma_1\gamma_+)^2=(\gamma_+\gamma_1)^2, (\gamma_1\gamma_-)^2=(\gamma_-\gamma_1)^2\\
{[\gamma_-,\gamma_+\gamma_1\gamma_+^{-1}]=[\gamma_+,\gamma_-^{-1}\gamma_1\gamma_-]=1}\\
\endarray
\right\rangle,
\end{equation}
where $\gamma_j$ is a meridian of the line $\ell_j$ ($i=1,2$) and $\gamma_\pm$ is a meridian of $H_{\pm 4}$
on the vertical line $x=\varepsilon$. The fundamental group of the double cover $\delta$ is obtained as the
kernel $\tilde K_1$ of the morphism $\tilde G_1\to \ZZ_2$ defined as $\gamma_1 \mapsto 1$, $\gamma_2 \mapsto 0$,
$\gamma_\pm \mapsto 0$.

The group $\tilde K_1$ is generated by 
$\gamma_1^2,\gamma_2,\tilde \gamma_2:=\gamma_1\gamma_2\gamma_1^{-1},\gamma_\pm,\tilde \gamma_\pm:=\gamma_1\gamma_\pm\gamma_1^{-1}$
and has relations
$$
\array{ll}
\tilde \gamma_2=\gamma_2\gamma_1^2=\gamma_1^2\gamma_2,&
{[\gamma_\pm,\gamma_2]=[\tilde\gamma_\pm,\gamma_2]=1,}\\
{[\gamma_\pm,\gamma_1^2]=[\tilde\gamma_\pm,\gamma_1^2]=1,}&
{[\gamma_\pm,\tilde\gamma_\pm]=1,}\\
\gamma_1^2\tilde\gamma_+\gamma_-\gamma_+=\gamma_+\tilde\gamma_-\tilde\gamma_+&
\gamma_1^2\gamma_-\gamma_+\tilde\gamma_-=\tilde\gamma_-\tilde\gamma_+\gamma_-.\\
\endarray
$$
Since $\gamma_1^2$ is the meridian of the conic $\delta^*(\ell_1)$, the quotient 
$$
\tilde K_1/\langle \gamma_1^2\rangle=\ZZ\gamma_2\times \left\langle \gamma_\pm,\tilde\gamma_\pm: 
\array{l}
[\gamma_\pm,\tilde\gamma_\pm]=1,\\
\tilde\gamma_+\gamma_-\gamma_+=\gamma_+\tilde\gamma_-\tilde\gamma_+,
\gamma_-\gamma_+\tilde\gamma_-=\tilde\gamma_-\tilde\gamma_+\gamma_-
\endarray
\right\rangle.
$$
is the fundamental group of the complement $Z_1$ of $g^{-1}(B_1\setminus\{\infty\})\cup \ell_2$. 
Now, the fundamental group $K_1$ of $\sigma^{-1}(Z_1)$ is isomorphic to the kernel of the homomorphism 
$\tilde K_1/\langle \gamma_1^2\rangle \to \ZZ_2$ defined by $\gamma_2 \mapsto 1$, $\gamma_\pm \mapsto 0$,
$\tilde\gamma_\pm \mapsto 0$, that is, 
$$
K_1=\ZZ\gamma_2^2\times \left\langle \gamma_\pm,\tilde\gamma_\pm: 
\array{l}
[\gamma_\pm,\tilde\gamma_\pm]=1,\\
\tilde\gamma_+\gamma_-\gamma_+=\gamma_+\tilde\gamma_-\tilde\gamma_+,
\gamma_-\gamma_+\tilde\gamma_-=\tilde\gamma_-\tilde\gamma_+\gamma_-
\endarray
\right\rangle.
$$ 
After factoring out the normal subgroup generated by $\gamma_2^2$, the meridian of $\sigma^*(\delta^*(\ell_2))$,
one obtains the desired presentation in~\eqref{eq:presentation1} where $x:=\gamma_-$, $y:=\gamma_-\gamma_+\gamma_-^{-1}$,
$u:=\tilde\gamma_+\gamma_+^{-1}$, and $v:=\gamma_+^{-1}\tilde\gamma_-\gamma_-^{-1}\gamma_+$.

Now consider the case $B_2=\{4,4\cI,\infty\}$. The strategy to follow is similar to that of the previous case.
However, note that the curve $H_{B_2\setminus\{\infty\}}$ is not real anymore. A way around this is to consider 
$h(u\cI,v\cI)-4\cI=-\cI\left(\tilde h(u,v)-4\right)$, where $\tilde h(u,v)=v^3-6v^2u+9vu^2-2u^3-6u$.
This allows us to consider the braid monodromy along real loops (for $H_4, \ell_1, \ell_2$) and purely imaginary 
loops for $H_{4\cI}$ as a braid monodromy over a real path for $\tilde H_4=\tilde h^{-1}(4)$. This produces the 
following presentation,
\begin{equation}
\label{eq:tildeG2}
\tilde G_2=\pi_1(\CC^2\setminus (H_{B_2\setminus\{\infty\}}\cup \ell_1\cup \ell_2))=
\ZZ \gamma_1\times \ZZ \gamma_2\times \langle \gamma_+,\gamma_-\rangle\cong \ZZ^2\times\FF_2,
\end{equation}
where $\gamma_j$ is a meridian of the line $\ell_j$ ($j=1,2$) and $\gamma_+$ (resp. $\gamma_-$) is a meridian 
of $H_{4}$ (resp. $H_{4\cI}$). Following the same strategy as above 
$\tilde K_2\cong\ZZ\gamma_1^2\times\ZZ\gamma_2\times\FF_2$, 
so $\tilde K_2/\langle\gamma_1^2\rangle\cong \ZZ\gamma_2\times\FF_2$. 
Consequently $K_2\cong \ZZ\gamma_2\times\FF_2$, and $G_2\cong \FF_2$.  

To prove the short exact sequence~\eqref{eq:presentation2}, note that the kernel of the projection map 
$x\mapsto x$, $y\mapsto y$, $u\mapsto 1$, $v\mapsto 1$ 
is generated by $u_w:=wuw^{-1}$ and $v_w:=wvw^{-1}$ for any $w\in \ZZ x * \ZZ y$. From the 
presentation~\eqref{eq:presentation1} of $G_1$, one obtains the following recursive set of relations 
$$
\array{llll}
u_{wx}=u_{wx^{-1}}=u_{w}, & v_{wy}=v_{wy^{-1}}=v_{w},\\
u_{wy}=v_{w}^{-1}u_w, & v_{wx}=u_{w}^{-1}v_w,\\
u_{wy^{-1}}=v_wu_w, & v_{wx^{-1}}=u_{w}v_w.
\endarray
$$
This allows one to reduce the set of generators of the kernel to $u_1=u$ and $v_1=v$ and no further relations. 
Since the abelianization of $G_1$ has rank $2$ and finitely generated free groups are Hopfian, the short 
exact sequence~\eqref{eq:presentation2} shows that $G_1$ cannot be free.
\end{proof}

\begin{rem}
Note that both spaces $U_{B_1}$ and $U_{B_2}$ are associated with affine pencils without multiple fibers and hence 
the epimorphisms of their fundamental groups onto the free group $\FF_2$ must have finitely generated kernels 
as shown by Catanese in~\cite[Thm. 5.4]{Catanese-Fibred}.
\end{rem}

The following result shows that the nodes of the curve $D_1$ are in special position, unlike those in~$D_2$.

\begin{lemma}
	\label{lemma:nodes}
	The 12 nodes in the curve $D_1$ are contained in three lines containing $4$ nodes each.
	There are no three nodes aligned in~$D_2$.
\end{lemma}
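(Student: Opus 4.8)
The plan is to pin down the twelve nodes of each curve explicitly and then argue by hand. A point is a node of $F_\lambda$ exactly when it is a critical point of $f$ lying on $F_\lambda$, so I would begin with $\partial_x f=6(x^5+y)=0$ and $\partial_y f=6(y^5+x)=0$; these force $y=-x^5$ and $x^{25}=x$, and discarding $x=0$ (which gives the node of $F_0$) leaves $x^{24}=1$, whence $f(x,y)=-4x^6$. Writing $a$ for the first coordinate and using $-x^5=-x^6x^{-1}$, it follows that the six nodes of $F_4$ are $\{(a,a^{-1}):a^6=-1\}$, those of $F_{-4}$ are $\{(a,-a^{-1}):a^6=1\}$, and those of $F_{4\cI}$ are $\{(a,\cI a^{-1}):a^6=-\cI\}$.

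For $F_{B_1}$ I would then observe that all twelve of these nodes lie on $\{x^3+y^3=0\}$: if $a^6=-1$ then $a^{-3}=a^3/a^6=-a^3$, and if $a^6=1$ then $a^{-3}=a^3$, so in either case $a^3+(\pm a^{-1})^3=0$. Since $\{x^3+y^3=0\}$ is the union of the three lines $L_\eta=\{y=\eta x\}$ with $\eta^3=-1$, it only remains to count nodes on each $L_\eta$: the relation $a^{-1}=\eta a$ has the two solutions with $a^2=\eta^{-1}$ (which automatically satisfy $a^6=\eta^{-3}=-1$), contributing two nodes of $F_4$, and likewise $-a^{-1}=\eta a$ contributes two nodes of $F_{-4}$. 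As $3\cdot 4=12$, each node lies on exactly one $L_\eta$, and two distinct $L_\eta$'s meet only at the origin, which is not a node of $F_{B_1}$, so this places each of the twelve nodes on precisely one of the three lines, four to a line.

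For $F_{B_2}$ I want to rule out any line through three of the twelve nodes. The six nodes of $F_4$ lie on the smooth conic $\{xy=1\}$ and the six nodes of $F_{4\cI}$ on $\{xy=\cI\}$, so a line carries at most two nodes from each fiber; it therefore suffices to exclude a line through two nodes of one fiber and one node of the other. The line through the nodes $(a,a^{-1})$, $(b,b^{-1})$ of $F_4$ (with $a\neq b$, $a^6=b^6=-1$) is $\{x+ab\,y=a+b\}$, and a node $(c,\cI c^{-1})$ of $F_{4\cI}$ lies on it iff $c^2-(a+b)c+\cI ab=0$. Now $a,b\in\mu_{12}$, so $a+b$ and $\cI ab$ lie in $\QQ(\zeta_{12})$; whereas $c^6=-\cI$ gives $c\in\mu_{24}$ with $c^{12}=-1$, so $c$ is a root of unity not in $\mu_{12}$, hence $c\notin\QQ(\zeta_{12})$, although $c^2\in\mu_{12}\subset\QQ(\zeta_{12})$. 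Rewriting the quadratic as $(a+b)c=c^2+\cI ab$ then forces $a+b=0$, and with $b=-a$ it collapses to $c^2=\cI a^2$, giving $c^6=\cI^3a^6=(-\cI)(-1)=\cI\neq-\cI$ — a contradiction. The remaining case, two nodes of $F_{4\cI}$ and one of $F_4$, follows from this one via the anti-holomorphic affine map $\Phi(x,y)=\bigl(e^{\pi\cI/12}\,\bar x,\ e^{5\pi\cI/12}\,\bar y\bigr)$: from $f\circ\Phi=\cI\bar f$ one gets $\Phi(F_\lambda)=F_{\cI\bar\lambda}$, so $\Phi$ interchanges $F_4$ and $F_{4\cI}$, permutes the twelve nodes, and — being of the form $z\mapsto M\bar z$ — carries complex lines to complex lines. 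Hence no line meets the twelve nodes of $F_{B_2}$ in three points.

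The step I expect to be the genuine obstacle is this last one: the conic argument alone only bounds the number of nodes on a line by four, and one really has to show that no secant of one fiber's six nodes passes through a node of the other — which is where the short cyclotomic computation above (or, failing that, an explicit and tedious inspection of all the secant lines) is needed. One should also check the easy but not-quite-automatic facts that $\Phi$ does send complex lines to complex lines and that it genuinely swaps the two fibers (from $f\circ\Phi=\cI\bar f$ together with $\cI\cdot\overline{4\cI}=4$), and, for the $F_{B_1}$ count, that the origin — the unique point shared by two of the three lines — is not among the nodes of $F_{B_1}$.
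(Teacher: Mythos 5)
Your proof is correct, and it takes a genuinely different route from the paper's. The paper works in the $(u,v)$-plane after the quotient maps $\sigma$ and $\delta$: it locates the four nodes $P_{\pm 4}, P_{\pm 4\cI}$ of the cubic fibers $H_\lambda$ and the tangency points $Q_{\pm 4}, Q_{\pm 4\cI}$, exhibits lines (resp.\ a smooth conic) joining them, and then pulls everything back through the $4{:}1$ covering $\delta\circ\sigma$ to read off collinearity/non-collinearity upstairs. Your argument never invokes the covers: you compute the critical locus of $f$ directly, parametrize the twelve nodes of each $F_{B_i}$ by roots of unity, and then argue on the nose — for $F_{B_1}$, the identity $a^3+(\pm a^{-1})^3=0$ shows all twelve nodes sit on $\{x^3+y^3=0\}$ with an easy $4+4+4$ count; for $F_{B_2}$, the two hyperbolas $xy=1$ and $xy=\cI$ plus a short cyclotomic-field argument and the anti-holomorphic symmetry $\Phi$ exclude any $3$ collinear nodes. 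Your version has the advantage of being entirely self-contained and elementary (no need to track which singularities of $F_\lambda$ come from nodes of $H_\lambda$ versus tangencies with $\ell_1,\ell_2$ under the branched covers), at the cost of a cyclotomic computation; the paper's version leverages the geometry it has already set up for computing $\pi_1$, so in context it's shorter, though the last step (''no conic joining these three points splits into a product of lines under the covering transformation'') is asserted rather than checked. I verified the identity $f\circ\Phi=\cI\bar f$, the quadratic $c^2-(a+b)c+\cI ab=0$, and the crucial field-theoretic fact that $c\in\mu_{24}\setminus\mu_{12}$ forces $c\notin\QQ(\zeta_{12})$; all hold, so your proof is complete.
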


\begin{proof}
	We use the notation introduced in the proof of Theorem~\ref{thm:Zariskipair}. It is easy to check that the node of $H_{\pm 4}$ (resp. $H_{\pm 4\cI}$) has coordinates $P_{\pm 4}=(u,v)=(\pm 1,\pm 3)$
	(resp. $P_{\pm 4\cI}=(u,v)=(\pm \frac{\cI}{4},\pm \frac{\cI}{4})$). Also, $H_{\pm 4}$ (resp. $H_{\pm 4\cI}$)
	is tangent to the line $\ell_1=\{v=0\}$ (resp. $\ell_2=\{v=4u\}$) at the point $Q_{\pm 4}=(\pm 1,0)$
	(resp. $Q_{\pm 4\cI}=(\pm \cI,\pm 4\cI)$).
	The line $\ell_+=\{v=3u\}$  joining $P_{4}$ and $P_{-4}$ is transformed through $\delta\circ\sigma$ into the
	singular conic $Q_+=\{3x^2+5xy+3y^2=0\}$ containing the 8 nodes which are in the preimage of $P_{\pm 4}$.
	Each of the lines in the conic $Q_+$ contains 4 of those nodes. In addition, the line $\ell_1$ is transformed
	through $\sigma$ into the line $x+y$ which contains the remaining 4 nodes which are the preimage of the
	tangencies~$Q_{\pm 4}$.
	
	On the other hand, let $\ell_-=\{(4-\cI)v-(12-\cI)u+2\cI=0\}$ be the line joining $P_{4}$ and $P_{4\cI}$.
	The preimage of $\ell_-$ through $\delta\circ\sigma$ is a smooth conic $Q_-$ containing the 8 nodes
	preimage of $P_{4}$ and $P_{4\cI}$. This proves that no three of those preimages can be aligned.
	An analogous argument rules out the existence of a line joining three points in the preimage of any two of the
	points of~$S=\{P_{4},P_{4\cI},Q_{4},Q_{4\cI}\}$.
	
	Assume that there is a line joining a preimage of $P_{4}$, $P_{4\cI}$, and $Q_{4}$, and consider the quartic
	consisting on the four lines obtained from it by applying the deck transformations of the global 4:1 covering
	induced by the restriction of $\delta\circ\sigma$. This invariant quartic is the preimage by $\delta\circ\sigma$
	is a conic which is tangent to $\ell_1$ at $Q_4$, since the preimages of $Q_4$ are double points in the quartic.
	However, this complete reducible quartic has four extra double points. Since they cannot be on the ramification
	locus, they have to be on a fiber. Hence, the conic has to be singular. This contradicts the condition about the
	tangency with $\ell_1$.
	An analogous argument rules out the existence of a line joining any three points in the preimage of three of the
	points of~$S$.
\end{proof}

Let $\omega:=e^{\frac{1}{12}\pi\cI}$. 
Note that the isomorphism
\begin{equation}
\label{eq:transfproj}
\array{rrcl}
\underline{\omega}: & \PP^2 & \to & \PP^2\\
& [x:y:z] & \mapsto & [\omega x:\omega^5 y:z],
\endarray
\end{equation}
takes $\overline{F^{-1}([a:b])}$ to $\overline{F^{-1}([a\cI:b])}$ 
for all $[a:b]\in \PP^1$. Using this transformation, Theorem~\ref{thm:Zariskipair}
immediately implies the following.

\begin{cor}\label{cor:non-free}
$\pi_1(U_{\{4,-4,\infty\}})\cong \pi_1(U_{\{4\cI,-4\cI, \infty\}})$ is the non-free group with presentation~\eqref{eq:presentation1}.
\end{cor}

We end this section by finding more affine fiber-type curves $D\subset\CC^2$ whose irreducible components are atypical fibers of $f$ and such $\pi_1(\CC^2\setminus D)$ is also free.

\begin{prop}
\label{prop:threshold}
Let $\underline\omega:\PP^2\to\PP^2$ be the isomorphism from equation~\eqref{eq:transfproj}. Then, 
\label{prop:free}
$$
\FF_3\cong\pi_1(U_{\{0,4,4\cI,\infty\}})\ \congmap{\underline{\omega}}\ \pi_1(U_{\{0,-4,4\cI,\infty\}})
\ \congmap{\underline{\omega}}\ \pi_1(U_{\{0,-4,-4\cI,\infty\}})\ \congmap{\underline{\omega}}\ 
\pi_1(U_{\{0,4,-4\cI,\infty\}}).$$
\end{prop}

\begin{proof}
Similar, but more involved calculations as those detailed in the proof of Theorem~\ref{thm:Zariskipair}, 
show that $G_0=\pi_1(U_{B_0})=\FF_3$ for $B_0=\{0,4,4\cI,\infty\}$.
In fact, using the notation from the proof of Theorem~\ref{thm:Zariskipair}, we have that
$$\tilde G_0=\pi_1(\CC^2\setminus (H_{B_0\setminus\{\infty\}}\cup\ell_1\cup\ell_2))=
\ZZ\gamma_1\times\ZZ\gamma_2\times\langle\gamma_0,\gamma_+,\gamma_-\rangle\cong\ZZ^2\times\FF_3,
$$
where $\gamma_1$ (resp. $\gamma_2$) are meridians of the lines $\ell_1$ (resp. $\ell_2$), $\gamma_0$ is a 
meridian around $H_0$ and $\gamma_+$ (resp. $\gamma_-$) is a meridian of $H_4$ (resp. $H_{4\cI}$).
\end{proof}

\subsection{A projective Zariski pair}\label{sec:projective-ZP}
In this section we continue using the notation presented at the beginning of~\S\ref{sec:zar-pairs}.
One can obtain a Zariski pair of projective curves in $\PP^2$ as follows.

\begin{thm}\label{thm:projPair}
The projective curves $(C_{\{4,-4\}},C_{\{4,4\cI\}})$ form a Zariski pair of projective curves of degree 12 
whose complements in $\PP^2$ have non-isomorphic fundamental groups.
More concretely, $\pi_1(U_{\{4,4\cI\}})=\ZZ*\ZZ_6$, whereas
\begin{equation}
\label{eq:proj-presentation}
\pi_1(U_{\{4,-4\}})\cong\pi_1(U_{\{4,-4,\infty\}})/\langle (u(yx)^3)^2\rangle,
\end{equation}
where $\langle \gamma\rangle$ denotes the normal subgroup of $\pi_1(U_{\{4,-4,\infty\}})$ generated by~$\gamma$.
\end{thm}

\begin{proof}
Let us denote $B_1'=\{4,-4\}$ and $B_2'=\{4,4\cI\}$.
The proof of $\pi_1(U_{B_2'})=\ZZ*\ZZ_6$ is a consequence of Theorem~\ref{thm:Zariskipair} and 
the Deletion Lemma in Theorem~\ref{thm:generic-fiber-orbi}.
The presentation of $\pi_1(U_{B_1'})$ can be obtained from the presentation of $\pi_1(U_{B_1})$ by quotienting 
by the normal subgroup generated by the meridian $\tilde\gamma_{\infty}$ around the line at 
infinity $\ell_{\infty}$. This meridian can be obtained by lifting the square $\gamma^2_{\infty}$ of the meridian 
around the line at infinity by the coverings $\sigma$ and $\delta$ from the proof of Theorem~\ref{thm:Zariskipair}. 
The real picture given in Figure~\ref{fig:curvareal} shows that 
$$\gamma^{-1}_{\infty}=\gamma_2\gamma_1(\gamma_+\gamma_-)^2\gamma_1^{-1}\gamma_-\gamma_1\gamma_-^{-1}\gamma_+\gamma_-.$$
Following the transformations given by the covering, one obtains
$$
\tilde\gamma_{\infty}^{-1}=
(\tilde\gamma_-(\gamma_+\gamma_-)^2\gamma_+)^2=(u(yx)^3)^2.
$$
We will show that $\pi_1(U_{B_1'})\not\cong \ZZ*\ZZ_6$. Let $N$ be the normal closure of the subgroup generated by $u$ and $v$. Note that $\pi_1(U_{B_1'})/N=\langle x,y\, :\, (yx)^6\rangle\cong \ZZ*\ZZ_6$. Hence, since $\ZZ*\ZZ_6$ is Hopfian, $\pi_1(U_{B_1'})\cong \ZZ*\ZZ_6$ if and only if $N$ is the trivial subgroup, which happens if and only if both $u$ and $v$ are trivial $\pi_1(U_{B_1'})$. Using the presentation of $\pi_1(U_{B_1'})$ it is easy to see that the rule
$$
\begin{array}{cccc}
\varphi: & \pi_1(U_{B_1'})&\longrightarrow & S_4\\
 & x &\longmapsto & (34)\\
  & y &\longmapsto & (24)\\
   & u &\longmapsto & (12)(34)\\
    & v &\longmapsto & (13)(24)\\
\end{array}
$$
defines a group homomorphism, and since the images of $u$ and $v$ are non-trivial elements of $S_4$,
neither $u$ nor $v$ are trivial in $\pi_1(U_{B_1'})$.
\end{proof}

\begin{rem}\label{rem:position}
As seen in the proof of Lemma~\ref{lemma:nodes}, the three lines containing the 12 nodes in $C_{\{4,-4\}}$ 
are the preimages of $\ell_1=\{v=0\}$ and $\ell_+=\{v=3u\}$. The latter passes through the nodes $(\pm 1,\pm 3)$
and it intersects the two cubics $H_{\pm 4}$ transversally at two other smooth points, namely $(\mp 2,\mp 6)$, 
and hence the preimage of $\ell_+$ by $\delta\circ\sigma$ (which splits in two lines $6x+(5+\sqrt{11}\cI)y=0$ and 
$6x+(5-\sqrt{11}\cI)y=0$) contains 8 nodes (4 each line) and it intersects 
$C_{\{4,-4\}}$ transversally at the remaining 8 points of intersection (again, 4 each line). On the other hand,
$\ell_1$ is tangent to the two cubics at $(\pm 1,0)$ and it intersects them transversally at two other points
$(\mp 2,0)$. This implies that its preimage (a ramification line $x+y=0$) passes through 4 nodes (the preimage
of a simple tangency to a ramification locus of order two becomes two nodes) and through 4 additional transversal
intersections (the preimages of $(\pm 2,0)$). In particular, the three lines passing through the 12 nodes are 
concurrent and the remaining intersection points of any of these three lines with the curve $C_{\{4,-4\}}$ are 
transversal intersections (4 transversal intersections per line). This stresses the fact that the only difference 
between the position of the singularities of the curves 
$(C_{\{4,-4\}},C_{\{4,4\cI\}})$ is the contribution of the special position of the nodes, and the same goes for 
$(C_{B_1}=C_{\{4,-4,\infty\}},C_{B_2}=C_{\{4,4\cI,\infty\}})$ (that is, the projective curves whose complements 
in $\PP^2$ are $\CC^2\setminus D_1$ and $\CC^2\setminus D_2$ respectively).
\end{rem}

\subsection{Threshold sets of atypical values for~$F$}\label{sec:threshold-example}
\mbox{} 

The following result shows that all the threshold sets of atypical values for $F$
contain only two elements from $\{4,-4,4\cI,-4\cI\}$, but not all choices are allowed.

\begin{prop}\label{prop:threshold-generic}
There are four threshold sets of atypical values for $F$, namely $T\in\cT_F$ is maximal if and only if
$T=\{0,\infty,\varepsilon_1 4,\varepsilon_2 4\cI\}$, where $\varepsilon_1,\varepsilon_2\in\{\pm 1\}$.
\end{prop}
\begin{proof}
By Theorem~\ref{thm:projPair}, $F_*:\pi_1(U_{\{4,-4\}})\to \pi_1^{\orb}(\PP^1\setminus \{4,-4\})\cong\ZZ*\ZZ_6$ 
is not an isomorphism. By Propositions~\ref{prop:BinBF} and \ref{prop:threshold-proj} one has  
$\{4,-4\}\notin \cT_F$. Using the transformation $\underline{\omega}$
from~\eqref{eq:transfproj} we also obtain that $\{4\cI,-4\cI\}\notin \cT_F$.
Hence, by Lemma~\ref{lem:preThreshold}\eqref{pre2}, no subset of $B_F$ containing $\{4,-4\}$ or $\{4\cI,-4\cI\}$
can be in $\cT_F$. The previous discussion and Proposition~\ref{prop:threshold} imply the result.
\end{proof}

Finally, we explicitly state the consequence of the computation of threshold sets.

\begin{thm}\label{thm:allPairs}
Let $F:U\to\PP^1$ be given by $F=[x^6+y^6+6xyz^4:z^6]$, where $U=\PP^2\setminus\{[x:y:0]\mid x^6+y^6=0\}$. 
Let $B_F\subset\PP^1$ be the set of atypical values of $F$. For every finite set $A\subset\PP^1$, let 
$C_A=\overline{F^{-1}(A)}\subset\PP^2$. 

Then, the following is a complete list of all Zariski pairs of curves $(C_A,C_{A'})$ such that 
the fundamental group of the complement of $C_A$ in $\PP^2$ is a free product of cyclic groups:

\vspace*{7pt}
\begin{center}
\begin{tabular}{|c|c|c|}
$\pi_1(\PP^2\setminus C_{A})$ & $A$ & $A'$ \\
\hline
$\FF_{r+1}*\ZZ_6$ & $\{\varepsilon_14,\varepsilon_24 \cI\}\cup B$ & $\{4\alpha,-4\alpha\}\cup B'$\\
\hline
$\FF_{r+2}$ & $\{\varepsilon_14,\varepsilon_24\cI,\infty\}\cup B$ & $\{4\alpha,-4\alpha,\infty\}\cup B'$\\
\hline
$\FF_{r+2}*\ZZ_6$ & $\{0, \varepsilon_14,\varepsilon_24\cI\}\cup B$ & $\{0,4\alpha,-4\alpha\}\cup B'$\\
\hline
$\FF_{r+3}$ & $\{0,\varepsilon_14,\varepsilon_24\cI,\infty\}\cup B$ & $\{0,4\alpha,-4\alpha,\infty\}\cup B'$\\
\hline
\end{tabular}
\end{center}
\vspace*{7pt}
where $B,B'\subset\PP^1\setminus B_F$ are finite, $r:=\# B= \# B'$, 
$\varepsilon_1,\varepsilon_2\in\{\pm 1\}$, and $\alpha\in\{1,\cI\}$.
\vspace*{7pt}
\end{thm}

\begin{proof}
Recall from the beginning of the section, that $B_F=\{0,\infty,\pm 4,\pm 4\cI\}$.
Let's write $A=A_1\cup B$, where $A_1=A\cap B_F$ and $B=A\setminus B_F$ 
(and analogously we define $A_1'$ and $B'$ from $A'$).
Since the closure of every fiber of $F$ is an irreducible curve in $\PP^2$, Zariski pairs of the form 
$(C_A,C_{A'})$ must satisfy that $\# A_1=\# A'_1$, $\# B=\# B'$, and the combinatorial type of the fibers at 
$A_1$ are in one-to-one correspondence with those at $A'_1$.
Also note that, if $A_1=A'_1$ one can use an isotopy of the base that fixes $B_F$ and takes $B$ to $B'$ to 
show that $(\PP^2,C_A)$ is isotopic to $(\PP^2,C_{A'})$ and hence $(\PP^2,C_A)\cong (\PP^2,C_{A'})$. 

Let $(C_A,C_{A'})$ be a Zariski pair, such that $\pi_1(\PP^2\setminus C_A)$ is a free product of cyclic groups.
We will prove that $A$ (resp. $A'$) must be as in the middle (right-most) column in the table above for one row.

Assume $\# (A_1\cap \{\pm 4,\pm 4\cI\}) =\# (A_1'\cap \{\pm 4,\pm 4\cI\})\leq 1$. Since the transformation 
$\underline{\omega}:\PP^2\to \PP^2$ from equation~\eqref{eq:transfproj} acts transitively 
on the fibers of $F$ at $\{\pm 4,\pm 4\cI\}$ and fixes the fibers at $\{0,\infty\}$, up to a power of 
$\underline{\omega}$ one can assume $A_1=A'_1$. By the first paragraph, $(\PP^2,C_A)\cong (\PP^2,C_{A'})$, 
so $(C_A,C_{A'})$ cannot be a Zariski pair.

Therefore, $\# (A_1\cap \{\pm 4,\pm 4\cI\}) =\# (A_1'\cap \{\pm 4,\pm 4\cI\})\geq 2$. 
This means in particular $\# A_1=\# A'_1\geq 2$ and hence 
Propositions~\ref{prop:BinBF} and \ref{prop:threshold-proj}
imply $\pi_1(\PP^2\setminus C_A)$ is a free product of cyclic groups if and only if $B_F\setminus A\in\cT_F$.
By Proposition~\ref{prop:threshold-generic}, the only possible $A$ satisfying the hypothesis are shown on the 
middle column of the table above. 
Note that $A'$ could be either a set from the middle column on the table above (shown as $A$) 
or a set from the right-most column. Since $\# A'_1\geq 2$, Proposition~\ref{prop:threshold-proj} implies that 
$\pi_1(\PP^2\setminus C_{A'_1})$ is a free product of cyclic groups in the former case, whereas it is not
in the latter. This shows that all cases on the table are Zariski pairs that can be distinguished by the 
groups of their complements.

Finally, if $\pi_1(\PP^2\setminus C_{A'})$ and $\pi_1(\PP^2\setminus C_{A})$ are isomorphic to a free product 
of cyclic groups (that is, both $A_1$ and $A'_1$ are shown in the middle column on the table above), then one 
can use $\underline{\omega}$ and assume $A_1=A'_1$. Again, as above, $(\PP^2,C_A)\cong(\PP^2,C_{A'})$. 
\end{proof}

\section{Twisted Alexander polynomials for $\SU(2)$-representations}\label{sec:tw-alex}
The purpose of this section is to show that twisted Alexander polynomials can also distinguish the fundamental
groups $G_1=\pi_1(U_{B_1})$ and $G_2=\pi_1(U_{B_2})$ presented in~\ref{sec:zar-pair}. To our knowledge, this
is the first example in the literature of a Zariski pair whose groups have the same Alexander invariant but
different twisted Alexander polynomials. Moreover, Lemma~\ref{lemma:nodes} shows that the position of the nodes
affects the fundamental groups and twisted Alexander polynomials are sensitive to this special position.
Classical Alexander polynomials and characteristic varieties are known not to be sensitive to the position of
nodes (see for instance~\cite{Libgober-Alexander-invariants} and \cite[2.5]{Libgober-characteristic}). We refer
the reader to \cite[Section 2]{CF} for a quick introduction on twisted Alexander polynomials and how to compute them.

We first compute the Alexander invariant of the complements $U_{B_1}$ and $U_{B_2}$.

\begin{prop}
	\label{prop:AlexInv}
	The Alexander invariant of both $\CC^2\setminus D_1$ and $\CC^2\setminus D_2$ is $\CC[t_1^{\pm 1},t_2^{\pm 1}]$.
\end{prop}

\begin{proof}
	Let us denote by $M_j$ the Alexander invariant of $\CC^2\setminus D_j$, that is, $M_j:=G'_j/G''_j$ as an $R$-module where
	$R=\CC[G_j/G'_j]=\CC[t_1^{\pm 1},t_2^{\pm 1}]$. It is immediate that $M_2=[x,y]R$.
	As for $M_1$, one can use the presentation~\eqref{eq:presentation1}, where the abelian classes of $x$ and $y$
	generate $G_1/G'_1$ and $u,v\in G'_1$.
	Using \cite[\S 2.5]{Artal-ji-Tokunaga-survey-zariski}, this implies that $M_1$ is generated by $[x,y]$, $u$, and $v$
	and whose relations are given by rewriting the relations in~\eqref{eq:presentation2} as elements of the $R$-module $M_1$.
	Since $[x,q]=0$ in $M_1$ as long as $q\in G'_1$, the first two relations in~\eqref{eq:presentation2} become trivial
	and the last ones become $u=v=0$. This shows $M_1\cong M_2\cong R$.
\end{proof}

By Proposition~\ref{prop:AlexInv}, the characteristic varieties of $U_{B_1}$ and $U_{B_2}$ are both $(\CC^*)^2$.
Analogously, the Alexander polynomials associated with any homomorphism $\varepsilon:G_j\to \ZZ^2$ are trivial.

In light of Lemma~\ref{lemma:nodes}, this is the announced result showing that twisted Alexander polynomials
(associated with non-abelian representations) can be sensitive to the position of nodes.

\begin{prop}
	The twisted Alexander polynomial for unitary representations in $\SU(2)$
	distinguishes $\pi_1(U_{B_1})$ and $\pi_1(U_{B_2})$.
\end{prop}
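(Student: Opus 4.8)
The plan is to exploit the contrast between $G_2=\pi_1(U_{B_2})\cong\FF_2$ being free and $G_1=\pi_1(U_{B_1})$ having cohomological dimension~$2$. Both groups abelianize to $\ZZ^2$, generated by the meridians $x,y$ of the two sextic components of $F_{B_i}$; fix $\alpha\colon G_i\to\ZZ^2$, $x\mapsto t_1$, $y\mapsto t_2$ (so $\alpha(u)=\alpha(v)=1$ for the auxiliary generators of~\eqref{eq:presentation1}), and for $\rho\colon G_i\to\SL(2,\CC)$ let $\rho\otimes\alpha\colon G_i\to\GL(2,\Lambda)$ with $\Lambda=\CC[t_1^{\pm1},t_2^{\pm1}]$; the twisted Alexander polynomial $\Delta^{\rho}_{G_i}\in\Lambda$ is then computed by Fox calculus from a presentation as in~\cite[Section~2]{CF}. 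The claim reduces to two points: (a) $\Delta^{\rho}_{G_2}\doteq 1$ for every $\rho\colon\FF_2\to\SL(2,\CC)$; and (b) there is some $\rho\colon G_1\to\SL(2,\CC)$ for which $\Delta^{\rho}_{G_1}$ is not a unit of $\Lambda$. Together these exhibit a twisted Alexander polynomial attained by $G_1$ but by no representation of $G_2$.

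Point~(a) is a soft consequence of freeness. Using the presentation $\langle x,y\mid\ \rangle$ with no relators, the twisted chain complex reduces to $0\to\Lambda^{4}\xrightarrow{\ \partial\ }\Lambda^{2}\to 0$ with $\partial$ the matrix of blocks $[\,t_1\rho(x)-I\mid t_2\rho(y)-I\,]$, and the order ideal of $H_0=\operatorname{coker}\partial$ is generated by the coprime one-variable polynomials $\det(t_1\rho(x)-I)\in\CC[t_1]$ and $\det(t_2\rho(y)-I)\in\CC[t_2]$, hence is the unit ideal. (This has to be phrased in the precise normalization used in~\cite{CF}; the content needed is simply that free groups have trivial twisted Alexander polynomials for all linear representations.)

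For point~(b) I would exhibit an explicit irreducible $\rho\colon G_1\to\SL(2,\CC)$ using presentation~\eqref{eq:presentation1}. Take $\rho(x)=\operatorname{diag}(\zeta,\zeta^{-1})$ with $\zeta$ a primitive $8$th root of unity, $\rho(v)=\rho(x)^{\pm2}$ (diagonal, hence automatically commuting with $\rho(x)$; the sign depends on the commutator convention), $\rho(u)=\bigl(\begin{smallmatrix}0&a\\-a^{-1}&0\end{smallmatrix}\bigr)$ anti-diagonal with $a\in\CC^{*}$, so that conjugation by $\rho(u)$ inverts diagonal matrices and therefore $[\rho(u)^{-1},\rho(x)]=\rho(v)$; and finally $\rho(y)$ a matrix conjugating $\rho(u)$ to $\rho(v)$ --- both have trace~$0$, with eigenvalues $\{\pm i\}=\{\zeta^{\pm2}\}$, so such $\rho(y)$ exists --- which in addition commutes with $\rho(u)\rho(v)$. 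A direct computation shows these conditions are compatible, determine $\rho(y)$ up to the single free parameter $a$, and make all four relations of~\eqref{eq:presentation1} hold; the resulting $\rho$ is non-abelian, and in fact irreducible since the only $\rho(x)$-invariant lines are the coordinate axes, neither of which is $\rho(y)$-invariant.

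It then remains to compute $\Delta^{\rho}_{G_1}(t_1,t_2)$ from the Fox Jacobian of~\eqref{eq:presentation1} (or of the shorter presentation~\eqref{eq:presentation2}) twisted by $\rho\otimes\alpha$, and to check that for a well-chosen $a$ the result is not a monomial of $\Lambda$, e.g.\ that it has a factor vanishing on $(\CC^{*})^{2}$. With~(a), this proves the proposition; and since the classical Alexander invariants of $U_{B_1}$ and $U_{B_2}$ agree by Proposition~\ref{prop:AlexInv}, it is genuinely the twisted invariant that does the separating. Heuristically, the non-triviality of $\Delta^{\rho}_{G_1}$ is the trace of the special alignment of the $12$ nodes of $F_{B_1}$ recorded in Lemma~\ref{lemma:nodes}. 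The main obstacle is this final computation: arranging the $8\times 8$ (respectively $6\times 6$) twisted Alexander matrix so that the relevant determinant or gcd can be put in closed form and recognized as a non-unit --- in agreement with the Sagemath computation referenced earlier --- and confirming that the ``free $\Rightarrow$ trivial'' input in~(a) holds in the chosen normalization.
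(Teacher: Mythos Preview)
Your outline is essentially the paper's own strategy, and even your explicit representation coincides with the paper's: taking $\rho(x)$ diagonal with primitive $8$th-root eigenvalues, $\rho(v)=\rho(x)^{\pm 2}$, and $\rho(u)$ anti-diagonal is exactly what the paper's $\rho_1$ does (the paper works with presentation~\eqref{eq:presentation2}, generator $w=v$, and one checks directly that $\rho_1(u)=\rho_1(y)^{-1}\rho_1(v)\rho_1(y)$ is anti-diagonal with entries $-\sqrt{2}i,\,-\tfrac{\sqrt{2}}{2}i$). The paper's choice of $\rho_1(y)=\tfrac{1}{2}\bigl(\begin{smallmatrix}\sqrt{2}&2\\-1&\sqrt{2}\end{smallmatrix}\bigr)$ is precisely a matrix with your required properties.

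Two points of divergence, and one genuine gap:

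\emph{One variable versus two.} The paper composes with $\varepsilon_1:G_1/G_1'\to\ZZ$, $x,y\mapsto 1$, rather than your $\alpha:G_1\to\ZZ^2$. This is not cosmetic: over the PID $\CC[t^{\pm 1}]$ Wada's formalism applies cleanly (gcd of minors, division by $\Delta_0$), whereas over $\CC[t_1^{\pm 1},t_2^{\pm 1}]$ Fitting ideals need not be principal and ``the'' twisted polynomial requires extra care. The one-variable choice also makes the final answer compact: $\Delta_{\rho_1,\varepsilon_1}(t)=t^2-\sqrt{2}\,t+1$.

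\emph{Your argument for (a).} The order of $H_0=\operatorname{coker}\partial$ is not the twisted Alexander polynomial; what you want is that a free presentation has no relators, so the Alexander matrix is empty and $\Delta_1=1$ by convention, hence $\Delta_{\rho,\varepsilon}=1$. The paper simply states this ``by definition''.

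\emph{The gap.} You explicitly stop short of the computation that carries the weight of the proof. The paper does it: from presentation~\eqref{eq:presentation2} and the specific $\rho_1$, it writes down the $6\times 6$ Fox matrix (twisted by $\rho_1\otimes\varepsilon_1$), shows the ideal of $4\times 4$ minors of the relevant $6\times 4$ submatrix is generated by $\Delta_1(t)=p(t)^2$ with $p(t)=(t-\xi)(t-\bar\xi)$, computes $\Delta_0(t)=\det(t\rho_1(x)-I)=p(t)$, and concludes $\Delta_{\rho_1,\varepsilon_1}(t)=p(t)\neq 1$. Without this explicit verification your proposal remains a correct plan rather than a proof.
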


\begin{proof}
	By definition, note that if $G=\FF_r$ is a free group, then $\Delta_{\rho,\varepsilon}(t)=1$
	for any representation $\rho$ of $G$ and any homomorphism $\varepsilon:G/G'\to \ZZ$.
	In particular, $\Delta_{\rho_2,\varepsilon_2}(t)=1$ for any twisted Alexander polynomial of
	the free group $G_2$. Hence, it is enough to find a representation, say
	$\rho_1:G_1\to \SU(2)$ and a homomorphism $\varepsilon_1:G_1/G'_1\to\ZZ$ such that
	$\Delta_{\rho_1,\varepsilon_1}(t)\neq 1$. Consider $\varepsilon_1:G_1/G'_1\to\ZZ$ defined
	as $\varepsilon_1(x)=1$, $\varepsilon_1(y)=-1$, $\varepsilon_1(u)=\varepsilon_1(v)=0$ and the
	irreducible representation~$\rho_1:G_1\to \SU(2)$ defined as:
	$$
	\array{ll}
	\rho_1(x):=
	\left(
	\array{cc}
	\xi & 0\\
	0 & \bar\xi
	\endarray
	\right),
	&
	\rho_1(y):=
	-\frac{\sqrt{2}}{2}\left(
	\array{cc}
	1 & 1\\
	-1 & 1
	\endarray
	\right),
	\\
	\rho_1(u):=
	\cI
	\left(
	\array{cc}
	-1 & 0\\
	0 & 1
	\endarray
	\right),
	&
	\rho_1(v):=
	\left(
	\array{cc}
	0 & -1\\
	1 & 0
	\endarray
	\right),
	\endarray
	$$
	where $\xi=\frac{\sqrt 2}{2}(1+i)$ is a primitive 8th root of unity.
	One can check that $A_1$ below is equivalent to the Fox matrix for the
	relations in~\eqref{eq:presentation1} with respect to $\rho_1$ and $\varepsilon_1$.
	$$
	A_1=
	{\small{
			\left(\begin{array}{c|rr|rr}
				* & 0 & 0 & (t-\bar\xi) & 0 \\
				& 0 & 0 & 0 & (t - \xi) \\
				\hline
				* & (t-\sqrt{2}) & \frac{\sqrt{2}}{2} t & \bar\xi\ \alpha(t) & 0 \\
				& -\sqrt{2} t & (t-\sqrt{2}) & 0 & \xi\ \alpha(t) \\
				\hline
				* & (t-\xi) & \frac{\sqrt{2}}{2}\cI(t-\xi) & \frac{\sqrt{2}}{2}(t-\sqrt{2}\cI)(t-\bar\xi) & -\frac{1}{2}(t-\sqrt{2})(t-\xi) \\
				& (t-\bar\xi) & -\frac{\sqrt{2}}{2}\cI(t-\bar\xi) & -\frac{\sqrt{2}}{2}(t-\sqrt{2})(t-\bar\xi) & -\frac{1}{2}\cI(t+\sqrt{2}\cI)(t-\xi) \\
			\end{array}\right)
	}}
	$$
We omit the first two columns since they will not be necessary for the calculation of the
twisted Alexander polynomial.
Following Wada~\cite[Lemma 1]{Wada-twisted} one needs to compute the Fitting ideal generated by the
$(4\times 4)$ minors of the submatrix of $A_1$ obtained after deleting the first two columns.
This ideal is generated by $\Delta_1(t)=\alpha(t)^2$, where $\alpha(t)=t^2-\sqrt{2}t+1=(t-\xi)(t-\bar\xi)$.
One also needs to compute $\Delta_0(t)=\det(\rho_1(x)t-I_2)=\alpha(t)$.
Hence, the twisted Alexander polynomial
$\Delta_{\rho_1,\varepsilon_1}(t)$ can be obtained as
$$
\Delta_{\rho_1,\varepsilon_1}(t)=\frac{\Delta_1(t)}{\Delta_0(t)}=\alpha(t)=(t-\xi)(t-\bar\xi).
$$
\end{proof}

\begin{rem}
Calculations for the twisted Alexander polynomials can be recreated using
SageMath\footnote{A program with the necessary source can be found in public repository:\\
\texttt{https://riemann.unizar.es/\~{}jicogo/software/AffineZariskiPair.ipynb} (Jupyter version),\\
\texttt{https://riemann.unizar.es/\~{}jicogo/software/AffineZariskiPair.sage} (plain text version).
}
The roots of $\Delta_{\rho_1,\varepsilon_1}(t)$ are in fact eigenvalues of the unitary representation
$\rho_1$ (see~\cite[Thm. 5.4]{Libgober-Non-vanishing}). It is also worth noting that the
representation $\rho_1$ factors though one of the degree-two representations of
the \emph{binary octahedral group} $\textrm{BO}_{48}$. The group $\textrm{BO}_{48}$ is the
finite group of symmetries of the octahedron and it is a non-abelian solvable
subgroup of order 48 of $\SU(2)$.
\end{rem}

\bibliographystyle{amsplain}
%\bibliography{free}
\providecommand{\bysame}{\leavevmode\hbox to3em{\hrulefill}\thinspace}
\providecommand{\MR}{\relax\ifhmode\unskip\space\fi MR }
% \MRhref is called by the amsart/book/proc definition of \MR.
\providecommand{\MRhref}[2]{%
  \href{http://www.ams.org/mathscinet-getitem?mr=#1}{#2}
}
\providecommand{\href}[2]{#2}

\end{document}